\newtheorem{lemma}{Lemma}[section]
\newtheorem{theorem}[lemma]{Theorem}
\newtheorem{proposition}[lemma]{Proposition}
\newtheorem{definition}[lemma]{Definition}
\renewcommand{\phi}{\varphi}
\renewcommand{\epsilon}{\varepsilon}
\newcommand{\UCP}{\mathrm{UCP}}
\newcommand{\n}{\mathrm{neg}}
\newcommand{\entropy}{\mathrm{H}}
\newcommand{\bin}{\{0,1\}}
\newcommand{\Turan}{Tur{\'a}n}
\newcommand{\IDR}{\mathrm{IDR}}
\newcommand{\litVar}[1]{\llbracket #1\rrbracket}
\title{On CNF formulas irredundant with respect to unit clause propagation}
\author{Petr Savick{\'y}\\
\normalsize Institute of Computer Science of the Czech Academy of Sciences\\
\normalsize Czech Republic, e-mail: savicky@cs.cas.cz}
\date{}
\begin{document}
\maketitle

\begin{abstract}
\begin{center}\textbf{Abstract}\end{center}
\bigskip
\small
Two CNF formulas are called \emph{ucp-equivalent}, if they behave in the same
way with respect to the unit clause propagation (UCP). A formula is called
\emph{ucp-irredundant}, if removing any clause leads to a formula which
is not ucp-equivalent to the original one. As a consequence of known
results, the ratio of the size of a ucp-irredundant formula and the
size of a smallest ucp-equivalent formula is at most $n^2$, where $n$ is
the number of the variables. We demonstrate an example of a ucp-irredundant
formula for a symmetric definite Horn function which is larger than
a smallest ucp-equivalent formula by a factor $\Omega(n/\ln n)$.
Consequently,
a general upper bound on the above ratio cannot be smaller than this.
\end{abstract}

\section{Introduction}

In some contexts,
the strength of unit clause propagation (UCP) on the formula
is important and formulas representing the same function have
to be distinguished according to the behavior of UCP on them.
An important example is the set of learned clauses in a SAT solver.
Most of these clauses are implicates of the original formula so
they do not change the represented function and they are added
in order to make UCP stronger.
In order to investigate CNF formulas in such a context,
we need to compare the strength of UCP on them.
In this paper, we consider a corresponding equivalence and
irredundancy.

Two CNF formulas are called \emph{ucp-equivalent},
if for every partial assignment $\alpha$, UCP derives the same
consistent set of literals from the two formulas together with
$\alpha$ or we get a contradiction in both cases. Considering
total assignments $\alpha$, one can verify that ucp-equivalent
formulas represent the same function, so they are equivalent.

Let us present a few examples demonstrating the notion of
ucp-equivalence. Formulas
\begin{eqnarray*}
\phi_1 & = & (a \vee c) \wedge (b \vee c) \\
\phi_2 & = & (a \vee c) \wedge (\neg a \vee b \vee c)
\end{eqnarray*}
are equivalent, but not ucp-equivalent, since UCP derives
the literal $c$ from the formula $\phi_1 \wedge \neg b$ and
not from the formula
$\phi_2 \wedge \neg b$. Formula $\phi_2$ is not prime, since
the literal $\neg a$ can be removed without changing the represented
function. Consider prime formulas
\begin{eqnarray*}
\phi_3 & = & (\neg a \vee b) \wedge (\neg a \vee c) \wedge (\neg b \vee \neg c \vee d) \\
\phi_4 & = & \phi_3 \wedge (\neg a \vee d)
\end{eqnarray*}
which are equivalent, since $\neg a \vee d$ is a consequence of $\phi_3$.
These formulas are not ucp-equivalent, since UCP on
$\phi_4 \wedge \neg d$ derives $\neg a$ while not on
$\phi_3 \wedge \neg d$.

Ucp-equivalent formulas have to be much closer to each other than
equivalent formulas. However, ucp-equivalent formulas may not contain
any common clause. An example of a pair of such formulas is
\begin{eqnarray*}
\phi_5 & = & (\neg a \vee b) \wedge (\neg b \vee c) \wedge (\neg c \vee a) \\
\phi_6 & = & (\neg a \vee c) \wedge (\neg c \vee b) \wedge (\neg b \vee a) \;.
\end{eqnarray*}
Without an additional partial assignment, UCP derives no new literal
on both these formulas.
After adding a positive literal on any of the variables $a$, $b$, or $c$,
UCP derives all the remaining positive literals on both the formulas
and the same is true for the negative literals. More generally,
consider any finite set of variables and two conjunctions of implications
between pairs of these variables which both form a strongly connected
directed graph. An implication $x \to y$ is equivalent to the clause
$\neg x \vee y$, so the conjunctions can be considered as CNF formulas.
Using the same argument as above, such formulas are always ucp-equivalent,
although the graphs can be quite different.

A formula is called \emph{ucp-irredundant},
if removing any clause changes the represented function or makes UCP
on the formula weaker. In both these cases,
the obtained formula is not ucp-equivalent to the original one.
All the formulas $\phi_1$, \ldots, $\phi_6$
are ucp-irredundant. Removing any clause from them except of $\neg a \vee d$
from $\phi_4$ changes the represented function. Removing the clause
$\neg a \vee d$ does not change the function represented by $\phi_4$,
but makes UCP on the formula weaker.

UCP on a CNF formula $\phi$ of $n$ variables can be precisely represented
using a specific Horn formula on $2n$ variables which we call
implicational dual rail encoding of $\phi$ and denote $IDR(\phi)$.
The construction is a Horn part of a formula used
in \cite{BKNW09,BJMSM12} to simulate UCP. The variables
of this formula represent literals on the variables of the original
formula. Such a representation is known as a dual rail encoding, see,
for example \cite{BBIMM18,IMM17,MIBMB19}, where it is used to reduce SAT
problem to MaxSAT for a specific Horn formula. The implicational dual rail
encoding of a formula $\phi$ is also a Horn formula, however, it is quite
different. It contains $k$ definite Horn clauses for every
clause of length $k$ in $\phi$, since each of the $k$ possible steps of UCP
in the clause is represented by a separate Horn clause or, equivalently,
by an implication.

Let us point out that representation of UCP using a Horn formula
is used in \cite{BKNW09}
to characterize the size of a CNF decomposition of a propagator
known also as a CNF encoding satisfying generalized arc consistency
for an arbitrary constraint using the
size of a specific monotone Boolean circuit up to a polynomial relation.
In \cite{BJMSM12} a formula whose Horn part represents UCP in some
other formula is used to construct a CNF
encoding of a disjunction of several CNF formulas preserving the
strength of UCP of the input formulas.
In \cite{Kucera22}, implicational dual rail encoding is used to construct
an algorithm computing a propagation complete formula using
results on learning definite Horn formulas.

Two formulas are ucp-equivalent if and only if their implicational
dual rail encodings
are equivalent in the sense to represent the same Horn
function. It is a well-known result \cite{HK93} that the ratio
of the size of a Horn formula irredundant with respect
to representing a given function and the size of a smallest
equivalent formula is at most the number of the variables.

Situations when the size of an irredundant and a smallest equivalent
structure are close to each other are rare in complexity.
In particular, this is not true for
CNF formulas with respect to equivalence. An unsatisfiable formula
of $n$ variables consisting of all $2^n$ clauses of length $n$ is
minimally unsatisfiable, since removing any of its clauses
makes it satisfiable. Hence, this formula is
irredundant in the sense that removing any clause changes the
represented function. However, the smallest equivalent formula
consists only of an empty clause.

Horn formulas represent a very narrow class of Boolean functions.
Using the implicational dual rail encoding we can transform the result for
Horn formulas to ucp-irredundant formulas representing an
arbitrary Boolean function. In this case, we consider
the ratio of the size of any ucp-irredundant formula and
the size of a smallest ucp-equivalent formula.
Since implicational dual rail encoding changes the number of the clauses, we get
a bound which is worse than in the case of Horn formulas.
Namely, the upper bound is $2n^2$, where $n$ is the number of the
variables.
A slightly better bound $n^2$ can be obtained by a proof used
in \cite{KS20} to prove the same bound for propagation complete
formulas since the assumption of propagation completeness is
used only to guarantee ucp-equivalence.
Moreover, it appears that the factor $n^2$ holds both for the
number of clauses and the length of the formula.

An interesting question is what is the maximum value of the
ratio of the size of any ucp-irredundant formula and the size
of a smallest ucp-equivalent formula in terms of the number
of the variables. It is plausible to assume that the upper
bound $n^2$ can be improved, however, this is an open problem.
The main result of the paper is an example of a ucp-irredundant
formula $\phi^\ell$ of $n$ variables which is larger than the
smallest ucp-equivalent formula $\phi^*$ by a factor $\Omega(n/\ln n)$.
It follows that a general upper bound on the ratio cannot be
smaller than this.

The formulas used in the example represent a symmetric definite
Horn function
and the construction of the formula $\phi^\ell$ is easy.
The hard part of the proof is to derive an upper bound on the
size of $\phi^*$ and we show that the size of $\phi^*$ is closely
related to the size of a specific covering system.
Namely, for $n$ and $k = n/2 + O(1)$, we use a system of
$(k+1)$-subsets of an $n$-set such that every $k$-set is
contained in at least one member of the system. The proof
of the needed properties of such systems and their relationship
to the formulas uses probabilistic method and takes a significant
part of the paper.
The proof is finished using a known upper bound
on the size of such a system from \cite{ES74} or a different
upper bound from \cite{Sid97,FR85}.
It is an interesting question whether the lower bound
$\Omega(n/\ln n)$ or an even better bound can be proven
using a different type of functions and without the use
of probabilistic method.

Section~\ref{sec:ucp-equiv-irred} defines ucp-equivalence and
ucp-irredundancy and Section~\ref{sec:results-formulation}
formulates the results.
The property of the formulas needed for the results is reformulated
in Section~\ref{sec:combinatorial-characterization} as a property
of a hypergraph. The construction of the required hypergraph
is described in Section~\ref{sec:size-of-hypergraph} using
an auxiliary result from Section~\ref{sec:random-permutations}.
The proofs of the results are summarized in
Section~\ref{sec:results-proof}. Conclusion and questions
for further research are formulated in Section~\ref{sec:conclusion}.

\section{Preliminaries}

\subsection{CNF formulas}
\label{sec:CNF-formulas}

A literal is a Boolean variable or its negation,
a clause is a disjunction of literals on different variables,
and a CNF formula is a conjunction of clauses. An empty
clause denoted $\bot$ is a clause containing no literals and
represents a contradiction.
A unit clause is a clause containing exactly one literal.
Since we consider only CNF formulas, we call them formulas
for simplicity.

A formula is considered as a set of clauses and a clause is
considered as a set of literals. In particular,
we use set operations on formulas and clauses.
If $A$ is a set of variables, let $\n(A) = \{\neg x \mid x \in A\}$
be a set of literals used typically as a part of a clause.
The size
$|\phi|$ of a formula $\phi$ is the number of its clauses.
The length of a clause
is the number of the literals in it. The length $\|\phi\|$
of a formula $\phi$ is the sum of the lengths of its clauses.

A clause $C$ is an implicate of a formula $\phi$, if every
assignment of the variables satisfying $\phi$ satisfies $C$.
A clause $C$ is a prime implicate of a formula $\phi$,
if it is an implicate of $\phi$ and no proper subset of $C$ is
an implicate of $\phi$.

A formula is a Horn formula, if every clause contains at most
one positive literal, and a formula is a definite Horn formula,
if every clause contains exactly one positive literal.

A partial assignment of the variables is a partial map from the
variables to the set $\{0,1\}$ understood as a conjunction of identities
of the form $x=a$, where $x$ is a variable and $a \in \bin$.
A partial assignment will be represented by a conjunction of the literals
equivalent to these identities. For example, the conjunction of identities
$x_1=1 \wedge x_2=0$ is equivalent to the conjunction of literals
$x_1 \wedge \neg x_2$.

Unit clause propagation (UCP) is a process of deriving literals or
a contradiction implied by the formula using the following process.
Assume a formula $\phi$ and let us initialize a set
$U$ as the set of clauses of length at most one contained in
$\phi$. Clauses of length one will be identified with the literals
contained in them.
All clauses in the initial set $U$ are
elements of $\phi$, so they are clearly consequences of $\phi$.
Assume a clause $C \in \phi$, and a literal $l \in C$.
If $U$ contains the negations of all the literals in
$C \setminus \{l\}$, then every satisfying assignment of $\phi$
satisfies $l$ and, hence, $l$ is a consequence of $\phi$.
Similarly, if $U$ contains the negation of all the literals
in $C$, then $\bot$ is a consequence of $\phi$. Unit clause propagation
on a formula $\phi$ is the process of iteratively extending $U$ by
literals and possibly $\bot$ derived as consequences of
this simple type. The process terminates if no further literal
or $\bot$ can be derived or if $\bot \in U$.

\begin{definition}
For every CNF formula $\phi$, let $\mathrm{UCP}(\phi)$ be
\begin{itemize}
\item
the set of literals contained in $\phi$ as unit clauses or derived by
unit clause propagation on $\phi$, if the contradiction is not reached,
\item
$\{\bot\}$ otherwise.
\end{itemize}
\end{definition}
One can show that $\mathrm{UCP}(\phi)$ does not depend on the
order of the derivations performed by the process of UCP.
If $\mathrm{UCP}(\phi) = \{\bot\}$, then $\phi$ is unsatisfiable.
The opposite is not true. For example, the formula
$
\phi_0 =
(x_1 \vee x_2) \wedge
(x_1 \vee \neg x_2) \wedge
(\neg x_1 \vee x_2) \wedge
(\neg x_1 \vee \neg x_2)
$
is unsatisfiable, however, $\UCP(\phi_0) = \emptyset$.

A non-empty partial assignment $\alpha$ can be used to \emph{simplify}
a formula $\phi$ to a formula $\phi'$ as follows. For every
$l \in \alpha$, every clause containing $l$ is removed and the negated
literal $\neg l$ is removed from all other clauses. One can
verify that $\UCP(\phi \wedge \alpha) = \UCP(\phi' \wedge \alpha)$.

\subsection{Covering and {\Turan} numbers}
\label{sec:covering-designs}

A $k$-set or a $k$-subset is a set or a subset of size $k$.
We use known bounds on covering numbers $C(n,r,k)$ \cite{ES74,GKP95}
and the closely related {\Turan} numbers $T(n,\ell,t)$ \cite{FR85,Sid97}.
Since there are different conventions to define these numbers, we
recall the definitions which we use.

\begin{definition}
For integers $n > r > k > 0$, let the covering number $C(n,r,k)$ be the
smallest size of
a system of $r$-subsets of an $n$-set such that every $k$-subset
is contained in at least one set in the system.
\end{definition}

\begin{definition}
For integers $n > \ell > t > 0$, let the {\Turan} number $T(n,\ell,t)$ be the
smallest size of
a system of $t$-subsets of an $n$-set such that every $\ell$-subset
contains at least one set in the system.
\end{definition}

Taking complements of the sets, we obtain $C(n,r,k) = T(n,n-k,n-r)$.
Note that \Turan's theorem implies a precise value of $T(n,\ell,2)$,
if we minimize the number of edges to be removed from a complete graph
on $n$ vertices in order to eliminate all complete subgraphs on $\ell$
vertices.

Let us recall several known results on covering numbers in the
case $r=k+1$ which we use. Clearly,
\begin{equation} \label{eq:trivi-lower-bound}
C(n,k+1,k) \ge \frac{1}{k+1} {n \choose k} \;.
\end{equation}
If $k$ is fixed and $n \to \infty$, then by \cite{R85}, we have
$$
C(n,k+1,k) = (1 + o(1)) \frac{1}{k+1} {n \choose k} \;.
$$
We are mainly interested in cases when $k=n/2+O(1)$ for which
only less precise bounds are known. Let $\mu(n,k)$ be such that
\begin{equation} \label{eq:define-mu-C}
C(n, k+1, k) = \frac{1}{k+1} {n \choose k} \mu(n, k)
\end{equation}
and, equivalently,
\begin{equation} \label{eq:define-mu-T}
T(n, t+1, t) = \frac{1}{t+1} {n \choose t} \mu(n, n-t-1) \;.
\end{equation}
For an upper bound on $\mu(n,k)$, let us recall the following.

\begin{theorem}[a special case of Theorem 13.4 from \cite{ES74}] \label{thm:ES74}
For every $n > k \ge 2$, we have
\begin{equation} \label{eq:covering-number-ub}
C(n,k+1,k) \le \frac{1}{k+1} {n \choose k} (1 + \ln(k+1))
\end{equation}
\end{theorem}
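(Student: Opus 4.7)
The plan is to apply the standard probabilistic method in its random-plus-greedy form for covering problems. The construction has two stages: first pick a random family of $(k+1)$-subsets that covers most $k$-subsets, then deterministically patch each of the few remaining uncovered $k$-subsets by adding one $(k+1)$-superset.

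Concretely, I would include each of the $\binom{n}{k+1}$ possible $(k+1)$-subsets of the ground set independently with probability $p \in [0,1]$. The expected number of sets chosen equals $p\binom{n}{k+1} = \frac{p(n-k)}{k+1}\binom{n}{k}$. A fixed $k$-subset $K$ has exactly $n-k$ supersets of size $k+1$, so $K$ remains uncovered with probability $(1-p)^{n-k} \le e^{-p(n-k)}$, and by linearity the expected number of uncovered $k$-subsets is at most $\binom{n}{k} e^{-p(n-k)}$. Adding one arbitrary $(k+1)$-superset for each uncovered $k$-subset yields a covering whose expected size is at most
\[
\binom{n}{k}\left[\frac{p(n-k)}{k+1}+e^{-p(n-k)}\right],
\]
so some realization achieves this size.

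The remaining step is the optimization over $p$. Writing $y = p(n-k)$ and differentiating $\tfrac{y}{k+1}+e^{-y}$ gives optimum $y^{*} = \ln(k+1)$, at which the bracket equals $\tfrac{1+\ln(k+1)}{k+1}$; this produces the claimed bound. I expect the only real subtlety is checking that the optimizing $p^{*} = \ln(k+1)/(n-k)$ lies in $[0,1]$: if instead $n-k \le \ln(k+1)$, then the trivial covering consisting of all $\binom{n}{k+1} = \tfrac{n-k}{k+1}\binom{n}{k}$ subsets of size $k+1$ is already smaller than the claimed upper bound, so the theorem holds in that regime as well. No further tricks appear to be needed beyond this routine optimization.
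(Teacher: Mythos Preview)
Your argument is correct and is exactly the classical random-plus-greedy proof that Erd\H{o}s and Spencer give for this bound; the paper does not supply its own proof but simply cites the result from \cite{ES74}, so there is nothing further to compare.
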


Together with (\ref{eq:trivi-lower-bound}), this implies
\begin{equation} \label{eq:mu-upper-1}
1 \le \mu(n,k) \le 1 + \ln(k+1)
\end{equation}
for every $n > k \ge 2$.
An incomparable upper bound on $\mu(n,k)$ follows from the bound
\cite{Sid97} on {\Turan} numbers which improves \cite{FR85} by a factor
of $2$. By this bound, we have
\begin{equation} \label{eq:bound-on-turan}
T(n, t+1, t) \le \frac{(1+o(1)) \ln t}{2t} {n \choose t}
\end{equation}
for $n \to \infty$ and $t \to \infty$.
Combining (\ref{eq:bound-on-turan}) with (\ref{eq:define-mu-T}),
substituting $k = n-t-1$ and assuming $n-k \to \infty$, we obtain
\begin{equation} \label{eq:mu-upper-2}
\mu(n,k) \le \frac{1+o(1)}{2} \ln (n-k-1) \;.
\end{equation}
Bounds (\ref{eq:mu-upper-1}) and (\ref{eq:mu-upper-2}) are complementary
in the sense that (\ref{eq:mu-upper-1}) is better, if $k$ is small,
while (\ref{eq:mu-upper-2}) is better, if $k$ is close to $n$.
Although the two bounds are incomparable in general, they are close
to each other for $n$ and $k = n/2 + O(1)$ used in this paper.

In Section~\ref{sec:results-proof} it is advantageous to rely on
inequalities for the function $\mu(n,k)$ instead of inequalities
for the covering numbers. The quantity $\mu(n,k)$ is logarithmically
bounded and using it allows us to consider this relatively small factor
independently of the main terms of the asymptotic estimate. Namely,
we will need the following.

\begin{lemma}
For every $n > k > 1$, we have
\begin{eqnarray}
\mu(n-1, k-1) & \le & \mu(n, k) \label{eq:mu-1}\\
\frac{n-k+1}{n+1}\mu(n, k) & \le & \mu(n+1, k)\label{eq:mu-2}
\end{eqnarray}
\end{lemma}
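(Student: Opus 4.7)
The plan is to unfold the definition of $\mu(n,k)$ in both inequalities and reduce each one to a combinatorial statement about the covering numbers $C(\cdot,\cdot,\cdot)$; each resulting combinatorial statement is then a short structural argument on an optimum covering family.

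For (\ref{eq:mu-1}) I would first apply the identity $\binom{n-1}{k-1} = (k/n)\binom{n}{k}$ to verify that the claim is algebraically equivalent to
$$
n \cdot C(n-1,k,k-1) \;\le\; (k+1)\cdot C(n,k+1,k).
$$
This I would establish by a link/double-counting argument. Take an optimum cover $\mathcal{F}$ realising $C(n,k+1,k)$ on the ground set $[n]$. For each $i \in [n]$ introduce the link
$$
\mathcal{F}_i \;=\; \{\, T \setminus \{i\} : T \in \mathcal{F},\ i \in T\,\},
$$
a family of $k$-subsets of $[n]\setminus\{i\}$. For any $(k-1)$-set $S \subseteq [n]\setminus\{i\}$, the $k$-set $S \cup \{i\}$ is covered by some $T \in \mathcal{F}$, so $T \setminus \{i\} \in \mathcal{F}_i$ contains $S$; hence $\mathcal{F}_i$ is a $(k,k-1)$-cover of an $(n-1)$-set, giving $|\mathcal{F}_i| \ge C(n-1,k,k-1)$. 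Summing over $i$ and using the double-counting identity $\sum_{i \in [n]} |\mathcal{F}_i| = \sum_{T \in \mathcal{F}} |T| = (k+1)|\mathcal{F}|$ finishes this part.

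For (\ref{eq:mu-2}) I would use $\binom{n+1}{k} = \frac{n+1}{n-k+1}\binom{n}{k}$; after cancellation the claim reduces to plain monotonicity in $n$,
$$
C(n,k+1,k) \;\le\; C(n+1,k+1,k).
$$
This I would prove by a substitution argument. Let $\mathcal{G}$ be an optimum cover realising $C(n+1,k+1,k)$ on $[n+1]$. Every $T \in \mathcal{G}$ with $n+1 \in T$ satisfies $|T \cap [n]| = k < n$, so one can pick $j_T \in [n]\setminus T$ and replace $T$ by $T' = (T \setminus \{n+1\}) \cup \{j_T\}$; sets avoiding $n+1$ are kept as they are. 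The resulting family $\mathcal{G}'$ has the same cardinality as $\mathcal{G}$ and consists of $(k+1)$-subsets of $[n]$. For any $k$-set $S \subseteq [n]$ pick $T \in \mathcal{G}$ with $S \subseteq T$; either $n+1 \notin T$, in which case $T \in \mathcal{G}'$, or else $T = S \cup \{n+1\}$ (forced by $|S|=k$, $|T|=k+1$, $n+1 \notin S$), in which case its replacement $T' = S \cup \{j_T\}$ still contains $S$.

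All of the real work is in the two binomial reductions; once those are in place, the link and substitution arguments are each only a few lines and quite standard, so I do not anticipate a subtle obstacle. The main care needed is just to carry out the binomial manipulations cleanly and to track the case $|T \cap [n]| < n$ so that the substitution in the second argument is always possible.
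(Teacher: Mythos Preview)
Your proposal is correct and matches the paper's approach: both inequalities are reduced to the same covering-number inequalities $\frac{n}{k+1}C(n-1,k,k-1)\le C(n,k+1,k)$ and $C(n,k+1,k)\le C(n+1,k+1,k)$, and your substitution argument for the second is exactly the paper's. The only difference is that for the first inequality the paper simply cites the Sch\"onheim bound, whereas you supply its standard link/double-counting proof; one cosmetic point is that in the second argument the replaced family $\mathcal{G}'$ may have collisions, so strictly $|\mathcal{G}'|\le|\mathcal{G}|$, which is all you need.
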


\begin{proof}
Both inequalities are direct consequencies of inequalities for
the covering numbers. In particular, by \cite{Sch64,GKP95}, we have
$$
\frac{n}{k+1} C(n-1, k, k-1) \le C(n, k+1, k)
$$
which implies (\ref{eq:mu-1}) using (\ref{eq:define-mu-C}).

Assume a system of $(k+1)$-subsets
of a set $X$ of $n+1$ points of size $C(n+1,k+1,k)$ and covering
all $k$-subsets. For any $x \in X$, one can create a system of
$(k+1)$-subsets of $X \setminus \{x\}$ covering all $k$-subsets
by replacing each occurence of $x$ in a $(k+1)$-tuple in the
system by any element of $X \setminus \{x\}$ not contained in
the $(k+1)$-tuple. Hence, we have
$$
C(n, k+1, k) \le C(n+1, k+1, k)
$$
implying (\ref{eq:mu-2}) using (\ref{eq:define-mu-C}).
\end{proof}

\section{Ucp-equivalence and ucp-irredundancy}
\label{sec:ucp-equiv-irred}

Let us introduce the following equivalence relation on CNF formulas.

\begin{definition}
Formulas $\phi_1$ and $\phi_2$ are \emph{ucp-equivalent} if for
every partial assignment $\alpha$, we have
$\UCP(\phi_1 \wedge \alpha) = \UCP(\phi_2 \wedge \alpha)$.
\end{definition}

By considering total assignments represented by $\alpha$ containing
exactly one literal (positive or negative) on each variable, one can
prove the following.

\begin{proposition} \label{prop:equivalence}
If two formulas are ucp-equivalent, then they are equivalent
in the sense to represent the same function.
\end{proposition}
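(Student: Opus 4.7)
The plan is to follow the hint and instantiate the definition of ucp-equivalence on total assignments, where UCP reduces to simply checking whether the assignment satisfies the formula. I would fix an arbitrary total assignment $\alpha$, regarded as a conjunction of $n$ unit clauses, one literal per variable, and compare $\UCP(\phi_1 \wedge \alpha)$ with $\UCP(\phi_2 \wedge \alpha)$.

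The key observation is that on input $\phi_i \wedge \alpha$, the UCP procedure starts with $U$ equal to the set of literals of $\alpha$, and then for every clause $C \in \phi_i$, totality of $\alpha$ forces one of two situations: either some literal of $C$ already lies in $\alpha$, in which case $C$ is satisfied and no UCP step can fire on $C$, or all literals of $C$ have their negations in $\alpha$, in which case UCP immediately derives $\bot$ from $C$. The intermediate case needed to derive a genuinely new literal, namely exactly one unassigned literal in $C$, cannot occur under a total $\alpha$. Consequently $\UCP(\phi_i \wedge \alpha) = \{\bot\}$ precisely when $\alpha$ falsifies some clause of $\phi_i$, and otherwise $\UCP(\phi_i \wedge \alpha)$ equals exactly the set of $n$ literals of $\alpha$.

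Given ucp-equivalence, these two outputs coincide for $i=1$ and $i=2$, so $\alpha$ satisfies $\phi_1$ if and only if $\alpha$ satisfies $\phi_2$. Since $\alpha$ was an arbitrary total assignment, $\phi_1$ and $\phi_2$ have identical satisfying sets and hence represent the same function. I do not expect a real obstacle here: the argument is essentially a matter of unpacking the UCP definition from Section~\ref{sec:CNF-formulas} in the special case where every variable is already assigned, so the only care needed is to justify the dichotomy above, which is immediate from the rule that UCP fires on $C$ exactly when all but at most one literal of $C$ are falsified in $U$.
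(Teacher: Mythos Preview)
Your proposal is correct and follows precisely the approach the paper indicates: the paper does not give a formal proof but only the hint ``by considering total assignments represented by $\alpha$ containing exactly one literal (positive or negative) on each variable, one can prove the following,'' and your argument is exactly the intended unpacking of that hint. The dichotomy you establish for each clause under a total $\alpha$ is the right observation, and the conclusion follows immediately.
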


The notion of ucp-equivalence appeared implicitly in analysis of
propagation complete (PC) formulas \cite{BMS12,BBCGKV13},
so let us recall their definition.
A formula $\phi$ is called \emph{propagation complete}, if for every
partial assignment $\alpha$ of the variables, UCP on
$\phi \wedge \alpha$ derives a contradiction, if $\phi \wedge \alpha$
is inconsistent, and all literals implied by $\phi \wedge \alpha$
otherwise. Since UCP is logically sound, a formula is propagation complete,
if the strength of UCP on it is maximal among all formulas representing
the same function. In particular, every two
equivalent PC formulas are ucp-equivalent, since UCP on them
is controlled by the function they represent. One can also
verify that a formula is PC if and only if it is ucp-equivalent
to the set of all its (prime) implicates.

Let us recall the notion of an \emph{absorbed clause} introduced
in \cite{BMS12}. Negation of a clause is the conjunction of the
negations of its literals.
A clause $C$ is absorbed by a formula $\phi$, if for every $l \in C$,
UCP on the formula $\phi \wedge \neg (C \setminus \{l\})$
derives $l$ or a contradiction.
A formula is called minimal PC formula in \cite{BMS12} if none of its
clauses is absorbed by the remaining ones.
This does not guarantee that no smaller equivalent PC formula
exists, however, the formula is irredundant with respect to the
behavior of UCP. We use a straightforward generalization of this
type of irredundancy obtained by removing the requirement that the
formula is propagation complete.

\begin{definition}
A formula $\phi$ is \emph{ucp-irredundant} if any of the following
equivalent conditions is satisfied:
\begin{itemize}
\item
None of the clauses $C \in \phi$ is absorbed by $\phi \setminus \{C\}$.
\item
For every $C \in \phi$, the formulas $\phi$ and $\phi \setminus \{C\}$
are not ucp-equivalent.
\end{itemize}
\end{definition}

The notion of absorption can naturally be extended to formulas.

\begin{definition}
A formula $\phi_1$ is absorbed by the formula $\phi_2$, which will
be denoted $\phi_1 \le_{ucp} \phi_2$, if every clause of $\phi_1$
is absorbed by $\phi_2$.
\end{definition}

One can verify that $\phi_1 \le_{ucp} \phi_2$ if and only if for
every partial assignment $\alpha$, we have
$\UCP(\phi_1 \wedge \alpha) \subseteq \UCP(\phi_2 \wedge \alpha)$
or $\UCP(\phi_2 \wedge \alpha) = \{\bot\}$. It follows that
the relation $\le_{ucp}$ is a preorder and any
two formulas $\phi_1$ and $\phi_2$ are ucp-equivalent if and only
if $\phi_1 \le_{ucp} \phi_2$ and $\phi_2 \le_{ucp} \phi_1$.
In particular,
$\le_{ucp}$ defines a partial order on the classes of ucp-equivalent
formulas and the class of PC formulas is the maximal class in this order.
Since, by definition, $\phi_1 \le_{ucp} \phi_2$ can be tested in polynomial
time, also ucp-equivalence can be tested in polynomial time.

By the result \cite{KS20}, if $\phi$ is a ucp-irredundant PC formula,
then the ratio of its size and the size of a smallest PC formula
equivalent to $\phi$ is
at most $n^2$, where $n$ is the number of the variables. The assumption
that the two formulas are PC is used only to guarantee that they behave
in the same way with respect to UCP or, equivalently, that the two
formulas are ucp-equivalent. It follows that the upper bound on the
ratio of their sizes can be generalized to non-PC formulas
as follows. The statement
is the strongest if $\phi^*$ is the smallest formula ucp-equivalent to
$\phi$, however, the proof does not use this assumption.

\begin{proposition} \label{prop:upper-bound}
If $\phi$ is a ucp-irredundant formula on $n$ variables and $\phi^*$
is ucp-equivalent to $\phi$, then
\begin{equation} \label{eq:1a}
|\phi| \le n^2 |\phi^*|
\end{equation}
and
\begin{equation} \label{eq:1b}
\|\phi\| \le n^2 \|\phi^*\|
\end{equation}
\end{proposition}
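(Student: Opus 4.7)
The plan is to adapt the proof from \cite{KS20} of the same $n^2$ upper bound in the ucp-irredundant propagation-complete setting; the only place propagation-completeness enters that argument is to guarantee ucp-equivalence of the two compared formulas, which is directly available in our hypothesis.

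The set-up is standard. For each clause $C \in \phi$, ucp-irredundancy provides a witness literal $l_C \in C$ such that, writing $\alpha_C := \n(C \setminus \{l_C\})$, UCP on $(\phi \setminus \{C\}) \wedge \alpha_C$ derives neither $l_C$ nor $\bot$. On the other hand $C$ itself becomes a unit clause under $\alpha_C$ forcing $l_C$, so $l_C \in \UCP(\phi \wedge \alpha_C) = \UCP(\phi^* \wedge \alpha_C)$ by ucp-equivalence. Running UCP on $\phi^* \wedge \alpha_C$, I would then let $m_C$ be the first literal it derives that does not lie in $\UCP((\phi \setminus \{C\}) \wedge \alpha_C)$, and take $D_C \in \phi^*$ to be the clause used at that step; by construction $m_C \in D_C$ and $\n(D_C \setminus \{m_C\}) \subseteq \UCP((\phi \setminus \{C\}) \wedge \alpha_C)$, so that the triple $(D_C, l_C, m_C)$ records a point at which $\phi^*$ outperforms $\phi \setminus \{C\}$ on $\alpha_C$, a discrepancy caused by $C$ inside $\phi$.

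The bound (\ref{eq:1a}) is obtained by charging each $C$ to the triple $(D_C, l_C, m_C)$. For each $D \in \phi^*$ there are at most $n$ choices of $l_C$ (a literal of $C$, which has length at most $n$) and at most $n$ choices of $m_C$ (a literal of $D$), giving at most $n^2 |\phi^*|$ triples in total; if the charging map is injective, (\ref{eq:1a}) follows. The length bound (\ref{eq:1b}) is obtained by an analogous accounting at the level of literal occurrences, charging every literal $l \in C$ of $\phi$ to a tuple involving a literal occurrence of $\phi^*$ together with a literal of $D_C$, with the same combinatorial factor $n^2$.

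I expect the main obstacle to be establishing the injectivity of the charging map, i.e., showing that the triple $(D_C, l_C, m_C)$ determines the partial assignment $\alpha_C$ and hence $C = \{l_C\} \cup \n(\alpha_C)$. The reconstruction of $\alpha_C$ has to be extracted from the structure of the UCP derivation in $\phi^*$: the triggering clause $D_C$ pins down $\n(D_C \setminus \{m_C\})$, the minimality of the step at which $m_C$ is derived forces the earlier derivation to be canonical, and back-chaining through the preceding UCP steps in $\phi^*$ recovers $\alpha_C$. This technical reconstruction is the core content of \cite{KS20} and is where the factor $n^2$ ultimately arises; I would essentially transcribe that argument, verifying that every invocation of propagation-completeness in it reduces to the use of ucp-equivalence, which holds under the present hypothesis.
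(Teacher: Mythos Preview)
The paper's proof is much shorter and runs in the opposite direction from your charging scheme. Instead of assigning to each $C\in\phi$ a triple in $\phi^*$ and proving injectivity, it starts from $\phi^*$: since $\phi^*\le_{ucp}\phi$, every clause of $\phi^*$ is absorbed by $\phi$, and for each pair $l\in C\in\phi^*$ a UCP derivation of $l$ (or $\bot$) from $\phi\wedge\neg(C\setminus\{l\})$ uses at most $n$ clauses of $\phi$. Collecting all clauses used over all such pairs yields $\phi'\subseteq\phi$ with $|\phi'|\le n\|\phi^*\|$. Then
\[
\phi \le_{ucp} \phi^* \le_{ucp} \phi' \le_{ucp} \phi,
\]
so $\phi'$ is ucp-equivalent to $\phi$; ucp-irredundancy of $\phi$ forces $\phi'=\phi$. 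Hence $|\phi|\le n\|\phi^*\|\le n^2|\phi^*|$ and $\|\phi\|\le n|\phi|\le n^2\|\phi^*\|$. No injectivity or reconstruction argument is needed at all.

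Your proposal, by contrast, postpones the entire content to the injectivity of $C\mapsto(D_C,l_C,m_C)$, which you do not prove. The reconstruction you sketch is circular: you want to recover $\alpha_C$ by ``back-chaining through the preceding UCP steps in $\phi^*$'', but that derivation is a derivation from $\phi^*\wedge\alpha_C$ and hence already depends on $\alpha_C$. Knowing only $D_C$ and $m_C$ tells you that $\neg(D_C\setminus\{m_C\})\subseteq\UCP((\phi\setminus\{C\})\wedge\alpha_C)$, but that set also depends on $\alpha_C$ and on $C$; nothing in the triple pins down which partial assignment you started from, and two distinct clauses $C_1,C_2$ sharing the literal $l$ can plausibly produce the same first discrepancy $(D,m)$. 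Moreover, the paper states that its short argument \emph{is} the one from \cite{KS20}, so the elaborate reconstruction you attribute to that reference does not appear to be what is actually done there. The direct subset argument above is both simpler and complete.
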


\begin{proof}
Every clause of $\phi^*$ is absorbed by $\phi$. Let $\phi'$
be the set of clauses in $\phi$ needed for this for all clauses of
$\phi^*$. For each pair $l \in C \in \phi^*$, we need at most
$n$ clauses of $\phi$ to derive $l$ or a contradiction from
$\phi \wedge \neg (C \setminus \{l\})$. This implies $|\phi'| \le n \|\phi^*\|$.
Since $\phi' \subseteq \phi$, we have
$$
\phi \le_{ucp} \phi^* \le_{ucp} \phi' \le_{ucp} \phi
$$
which implies that the formula $\phi'$ is ucp-equivalent to $\phi$.
Since $\phi$ is ucp-irredundant, $\phi'$ is equal to $\phi$.
Hence, we have $|\phi| \le n \|\phi^*\|$ implying
$$
|\phi| \le n \|\phi^*\| \le n^2 |\phi^*|
$$
and
$$
\|\phi\| \le n |\phi| \le n^2 \|\phi^*\|
$$
as required.
\end{proof}

Using part of a construction used in \cite{BKNW09,BJMSM12},
UCP on a CNF formula can be represented by the following Horn formula.
Assume, $\phi = C_1 \wedge \ldots \wedge C_m$ is a formula on variables
$X=\{x_1, \ldots, x_n\}$. For every literal $l$ on a variable from $X$
introduce a variable $\litVar{l}$. A total assignment of these variables
satisfying $\neg \litVar{x_i} \vee \neg \litVar{\neg x_i}$ for every
variable $x_i \in X$ encodes the set of the satisfied literals under
a partial assignment of the variables $X$.
The \emph{implicational dual rail encoding} of $\phi$ is
$$
\IDR(\phi) = \IDR(C_1) \wedge \ldots \wedge \IDR(C_m) \wedge
\bigwedge_{i=1}^n (\neg \litVar{x_i} \vee \neg \litVar{\neg x_i})
$$
where for a clause $C = l_1 \vee \ldots \vee l_k$,
$$
\IDR(C) = \bigwedge_{i=1}^k \left(\bigwedge_{j\not=i} \litVar{\neg l_j} \to \litVar{l_i}\right)
$$
The set of models of $\IDR(\phi)$ is precisely the set of encodings
of partial assignments of the variables $X$ which are closed under UCP.
This implies the following.

\begin{proposition}
Two formulas $\phi_1$ and $\phi_2$ are ucp-equivalent if and only if
the Horn formulas $\IDR(\phi_1)$ and $\IDR(\phi_2)$ are equivalent.
\end{proposition}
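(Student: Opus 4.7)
The key observation supplied just before the proposition is that the models of $\IDR(\phi)$ correspond exactly to the encodings of consistent partial assignments of $X$ that are closed under UCP on $\phi$. Let me write $\mathcal{M}(\phi)$ for this set of UCP-closed consistent partial assignments. Then $\IDR(\phi_1)$ and $\IDR(\phi_2)$ are equivalent Horn functions iff they have the same models iff $\mathcal{M}(\phi_1) = \mathcal{M}(\phi_2)$. So the whole proposition reduces to showing
\[
\text{$\phi_1$ and $\phi_2$ are ucp-equivalent} \iff \mathcal{M}(\phi_1) = \mathcal{M}(\phi_2).
\]

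For the forward direction, suppose the formulas are ucp-equivalent and pick $\beta \in \mathcal{M}(\phi_1)$. Since $\beta$ is consistent and closed under UCP on $\phi_1$, applying the definition of UCP-closure gives $\UCP(\phi_1 \wedge \beta) = \beta$. Ucp-equivalence with the partial assignment $\alpha = \beta$ then yields $\UCP(\phi_2 \wedge \beta) = \beta$, so $\beta \in \mathcal{M}(\phi_2)$. By symmetry the two sets agree.

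For the backward direction I would use a short monotonicity argument. Fix any partial assignment $\alpha$, and let $\beta_i = \UCP(\phi_i \wedge \alpha)$ for $i=1,2$. If $\beta_1$ is consistent, then $\beta_1 \in \mathcal{M}(\phi_1) = \mathcal{M}(\phi_2)$, so $\UCP(\phi_2 \wedge \beta_1) = \beta_1$; since $\alpha \subseteq \beta_1$, monotonicity of UCP in the initial partial assignment gives $\beta_2 = \UCP(\phi_2 \wedge \alpha) \subseteq \UCP(\phi_2 \wedge \beta_1) = \beta_1$ (and in particular $\beta_2 \ne \{\bot\}$). The symmetric argument yields $\beta_1 \subseteq \beta_2$, hence $\beta_1 = \beta_2$. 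If instead $\beta_1 = \{\bot\}$, then $\beta_2$ cannot be consistent, for otherwise $\beta_2 \in \mathcal{M}(\phi_2) = \mathcal{M}(\phi_1)$ and the same monotonicity argument run in the other direction would force $\beta_1 \subseteq \beta_2$, contradicting $\beta_1 = \{\bot\}$. Thus $\beta_1 = \beta_2$ in every case, establishing ucp-equivalence.

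The only mild subtlety, and the one place where a careful argument is really required, is the monotonicity step: one must verify that adding literals to the initial partial assignment can only extend (or collapse to $\{\bot\}$) the UCP closure, which is a direct induction on the derivation order of UCP and can be stated as a small auxiliary lemma. Everything else is bookkeeping once the model-set description of $\IDR(\phi)$ is taken for granted.
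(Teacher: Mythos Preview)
Your proposal is correct and follows exactly the route the paper intends: the paper states, just before the proposition, that the models of $\IDR(\phi)$ are precisely the encodings of partial assignments closed under UCP, and then presents the proposition without any further proof, treating it as an immediate consequence. Your argument spells out this consequence carefully, including the monotonicity step, and is a faithful elaboration of what the paper leaves implicit.
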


\section{The results}
\label{sec:results-formulation}

The main result is the following separation between the size
of a ucp-irredundant formula and a smallest ucp-equivalent formula.

\begin{theorem} \label{thm:separation}
For every sufficiently large $n$ and an integer $k=n/2 + O(1)$, there
is a ucp-irredundant formula $\phi^\ell$ of $n$ variables and of size
$$
|\phi^\ell| = (k+1) {n-1 \choose k} = \Theta(n) {n \choose k}
$$
and such that a smallest ucp-equivalent formula $\phi^*$
has size
$$
|\phi^*| \le 6(k+1) C(n, k+1, k) = O(\ln n) {n \choose k}
$$
In particular, $|\phi^\ell| / |\phi^*| = \Omega(n / \ln n)$.
\end{theorem}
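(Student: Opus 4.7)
The plan is to write down both formulas explicitly, check ucp-irredundancy of $\phi^\ell$ directly, and reduce ucp-equivalence of $\phi^*$ to $\phi^\ell$ to a combinatorial property of a covering hypergraph which is then supplied by Sections~\ref{sec:combinatorial-characterization}--\ref{sec:random-permutations}. Both formulas will represent the symmetric definite Horn function $f$ whose satisfying assignments are those of Hamming weight at most $k-1$ together with the all-ones assignment; equivalently, the closure operator of $f$ sends any set of size at least $k$ to the full variable set. For $\phi^\ell$ I would fix a distinguished variable $x_0$ and include all clauses $B \to x_0$ with $B$ a $k$-subset not containing $x_0$, together with all clauses $B \to j$ with $B$ a $k$-subset containing $x_0$ and $j \notin B$. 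A short arithmetic check gives $|\phi^\ell| = \binom{n-1}{k} + k\binom{n-1}{k} = (k+1)\binom{n-1}{k}$, and the defining closure rules of $f$ are reproduced by these clauses. Ucp-irredundancy is then verified clause by clause: for any $C = B \to j \in \phi^\ell$ the partial assignment $\alpha = B \wedge \neg j$ produces a contradiction in $\phi^\ell \wedge \alpha$ by a single use of $C$, and the combinatorial characterization of Section~\ref{sec:combinatorial-characterization} is used to rule out any alternative one-step UCP derivation from $(\phi^\ell \setminus \{C\}) \wedge \alpha$.

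The smaller formula $\phi^*$ is built from an $(n, k+1, k)$-covering $\mathcal{C}$ of size $C(n, k+1, k)$ provided by Theorem~\ref{thm:ES74}: for each $S \in \mathcal{C}$ one includes at most $6(k+1)$ clauses of length $k+1$ encoding the forcing implied by the closure operator on $S$ plus a handful of auxiliary propagations needed to handle negative literals correctly. The direction $\phi^* \le_{ucp} \phi^\ell$ is immediate since every clause of $\phi^*$ is an implicate of $\phi^\ell$. The reverse direction $\phi^\ell \le_{ucp} \phi^*$ is the main obstacle: one has to show that for every partial assignment $\alpha$, UCP on $\phi^* \wedge \alpha$ cascades through the sparse covering clauses to eventually derive every literal that $\phi^\ell \wedge \alpha$ derives. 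A generic minimum-size covering can leave UCP stuck inside a proper subset of variables, so this step cannot be done with an arbitrary $\mathcal{C}$. This is precisely what the hypergraph reformulation of Section~\ref{sec:combinatorial-characterization} and the probabilistic random-permutation construction of Sections~\ref{sec:size-of-hypergraph}--\ref{sec:random-permutations} deliver: a covering together with a clause assignment that guarantees the required cascading. Feeding this into the combinatorial characterization yields the desired ucp-equivalence.

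Finally the two sizes are combined. For $k = n/2 + O(1)$, an elementary binomial manipulation gives $|\phi^\ell| = (k+1)\binom{n-1}{k} = \Theta(n)\binom{n}{k}$, while Theorem~\ref{thm:ES74} gives $(k+1) C(n, k+1, k) \le \binom{n}{k}(1 + \ln(k+1))$, hence $|\phi^*| \le 6(k+1) C(n, k+1, k) = O(\ln n) \binom{n}{k}$. Dividing the two yields $|\phi^\ell| / |\phi^*| = \Omega(n / \ln n)$ as asserted. The genuinely hard part of the proof is the cascading lemma of the second paragraph; the irredundancy check and the binomial estimates are essentially bookkeeping once the combinatorial framework of Sections~\ref{sec:combinatorial-characterization}--\ref{sec:random-permutations} is in place.
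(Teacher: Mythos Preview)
Your $\phi^\ell$ coincides with the paper's (it is the hypergraph formula $\theta(H_1)$ where $H_1$ is the set of all $(k{+}1)$-subsets containing the distinguished variable), and your outline for $\phi^*$ together with the size computations matches the paper. The genuine gap is in your ucp-irredundancy argument.

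For a clause $C = B \to j$ of the second kind, i.e.\ with $x_0 \in B$, your witness $\alpha = B \wedge \neg j$ sets all of $B$ (including $x_0$) to $1$. But then for every $j' \notin B \cup \{j\}$ the clause $B \to j'$ is still present in $\phi^\ell \setminus \{C\}$ and fires immediately, deriving $j'=1$. After this cascade every variable except $j$ is at $1$, so any clause $B'' \to j$ with $x_0 \in B'' \neq B$ and $j \notin B''$ (and there are many such clauses) fires and yields $j=1$, contradicting $\neg j$. Thus $(\phi^\ell \setminus \{C\}) \wedge \alpha$ \emph{does} derive $\bot$ by iterated UCP, and your restriction to ``one-step'' derivations is beside the point: absorption is defined via full UCP, not a single round.

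The repair, which is exactly what the paper does in Proposition~\ref{prop:phi-ell}, is to always choose the witness literal to be the one on the distinguished variable $x_0$. Let $\gamma$ falsify the $k$ literals of $C$ on the variables other than $x_0$ and leave $x_0$ unassigned. Every clause of $\phi^\ell$ contains $x_0$; a clause on a variable set different from that of $C$ then has at least two unassigned literals (the one on $x_0$ and one on a variable outside $\var(C)$), while a clause $D \neq C$ on the same variable set as $C$ differs from $C$ in the sign of two literals, at least one of which is not on $x_0$ and is therefore satisfied by $\gamma$. Hence UCP on $(\phi^\ell \setminus \{C\}) \wedge \gamma$ derives nothing, which is the correct witness for irredundancy.
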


For a proof, we investigate the size of ucp-irredundant formulas
ucp-equivalent to the following definite Horn formula $\Psi_{n,k}$.

\begin{definition}
For any integers $n$, $k$, such that $1 \le k \le n-1$,
let $\Psi_{n,k}$ be the CNF formula consisting of all clauses on
variables $X = \{x_1, \ldots, x_n\}$ containing $k$ negative literals
and one positive literal.
\end{definition}

Clearly, $|\Psi_{n,k}| = (n-k) {n \choose k}$.
A Boolean function is called symmetric, if it is invariant under
any permutation of its variables. Note that formula $\Psi_{n,k}$
represents the symmetric Boolean function $f_{n,k}$ of $n$ variables
defined by
\begin{equation} \label{eq:function}
f_{n,k}(x_1, \ldots, x_n) = 1 \Leftrightarrow
\left( \sum_{i=1}^n x_i < k \right) \vee
\left( \sum_{i=1}^n x_i = n \right)
\end{equation}
and, moreover, $\Psi_{n,k}$ is the set of all prime implicates
of this function.

The proof of Theorem~\ref{thm:separation} relies on the upper
bound on the size of a smallest formula ucp-equivalent to $\Psi_{n,k}$
implied by the following proposition. The lower bound on this size
is included in order to demonstrate that the upper bound cannot be
improved more than by a logarithmic factor.

\begin{proposition} \label{prop:size-of-theta}
For every $1 \le k \le n-1$, the minimum size (number of clauses)
of a CNF formula ucp-equivalent to $\Psi_{n,k}$ is at least
$$
n\, C(n-1, k, k-1) = {n \choose k} \mu(n-1, k-1)
\ge {n \choose k}
$$
and if $k=n/2+O(1)$ then it is at most
$$
6(k+1) C(n, k+1, k) = 6\, {n \choose k} \mu(n, k)
= O(\ln n) {n \choose k}\;.
$$
\end{proposition}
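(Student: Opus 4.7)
The plan is to establish the upper and lower bounds separately, both leveraging the combinatorial characterization from Section~\ref{sec:combinatorial-characterization} and the properties of covering numbers recalled in Section~\ref{sec:covering-designs}.

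For the upper bound, the strategy is to construct an explicit small formula from a near-optimal covering system. Fix a system $\mathcal{S}$ of $(k+1)$-subsets of the variable index set $\{1,\ldots,n\}$, with $|\mathcal{S}| = C(n,k+1,k)$, such that every $k$-subset of indices is contained in some member of $\mathcal{S}$. For each $S \in \mathcal{S}$ and each $i \in S$, include the clause $\bigvee_{j \in S \setminus \{i\}} \neg x_j \vee x_i$, which is a prime implicate of $\Psi_{n,k}$. This produces $(k+1)\,C(n,k+1,k)$ clauses, and the factor $6$ in the stated bound accommodates additional clauses needed to guarantee that UCP chains fire correctly in all directions (including derivations triggered by negative literals in the partial assignment). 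To prove ucp-equivalence one shows that for every partial assignment $\alpha$, UCP on the constructed formula together with $\alpha$ derives exactly the same set of literals as UCP on $\Psi_{n,k} \wedge \alpha$: positive-literal derivations propagate through sequences of overlapping $(k+1)$-subsets drawn from $\mathcal{S}$, and negative-literal derivations are handled by a symmetric argument. The non-trivial task of building a covering whose overlap structure supports these chains is deferred to Section~\ref{sec:size-of-hypergraph} via the probabilistic method, invoking the permutation-based auxiliary result of Section~\ref{sec:random-permutations}.

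For the lower bound, let $\phi$ be any CNF formula ucp-equivalent to $\Psi_{n,k}$. Fix a variable $x_i$ and consider the partial assignments $\alpha_{i,B} = \neg x_i \wedge \bigwedge_{j \in B} x_j$, where $B$ ranges over $(k-1)$-subsets of $\{1,\ldots,n\} \setminus \{i\}$. UCP on $\Psi_{n,k} \wedge \alpha_{i,B}$ derives $\neg x_m$ for every $m \notin B \cup \{i\}$, because otherwise the number of ones would reach $k$ while $x_i = 0$, contradicting $f_{n,k}$. The same derivations must occur in $\phi$; each is effected by some clause whose other literals are falsified either by $\alpha_{i,B}$ or by previously derived negative literals. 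The set of triggering clauses, indexed by $B$ and $m$, gives rise via the characterization of Section~\ref{sec:combinatorial-characterization} to a covering of the $(k-1)$-subsets of $\{1,\ldots,n\} \setminus \{i\}$ by $k$-subsets; hence at least $C(n-1,k,k-1)$ clauses of $\phi$ are required for the derivations associated with $x_i$. Summing over $i \in \{1,\ldots,n\}$, after controlling for any sharing between distinct $i$'s by grouping clauses according to their positive literal, yields $n\,C(n-1,k,k-1) = {n \choose k} \mu(n-1,k-1)$.

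The main obstacle is the upper bound, specifically verifying ucp-equivalence of the covering-based construction. The covering property is sufficient for UCP to trigger initially once $k$ positive literals have been set, but producing further derivations along a chain requires carefully aligned overlaps between covering members, and a generic minimum covering system need not support all required chains. The proof therefore combines a probabilistic construction with a constant-factor augmentation of the natural family, trading a modest blow-up for the required propagation connectivity. The final reduction tying the hypergraph property back to ucp-equivalence, together with the lower-bound counting argument, is assembled in Section~\ref{sec:results-proof}.
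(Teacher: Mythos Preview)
Your proposal is correct and follows the paper's approach: the upper bound via a hypergraph formula $\theta(H)$ for an $H$ with connected restrictions built by the probabilistic argument of Sections~\ref{sec:random-permutations}--\ref{sec:size-of-hypergraph}, and the lower bound by showing that, for each $x_i$, the clauses with positive literal $x_i$ must form a $(k,k-1)$-covering of $X\setminus\{x_i\}$ (the paper phrases this via the in-degree of $x_i$ in $G^d(\phi^*,A)$, which is equivalent to your direct analysis of the first UCP step under $\alpha_{i,B}$). One point you should make explicit: the ``grouping clauses according to their positive literal'' step requires first invoking Lemma~\ref{lem:prime} to reduce to a prime $\phi \subseteq \Psi_{n,k}$, since an arbitrary ucp-equivalent formula need not have exactly one positive literal per clause, and without this the sum over $i$ could overcount.
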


The proof of Theorem~\ref{thm:separation} and
of Proposition~\ref{prop:size-of-theta}
are summarized in Section~\ref{sec:results-proof}
using constructions presented in this and the following sections.
The lower bound in Proposition~\ref{prop:size-of-theta} is proven
in Section~\ref{sec:combinatorial-characterization}. For the
proof of the remaining statements, we construct for infinitely many
pairs $(n, k)$ ucp-irredundant formulas $\phi^\ell$
(Section~\ref{sec:combinatorial-characterization}) and
$\phi^*$ (Section~\ref{sec:results-proof}) ucp-equivalent
to $\Psi_{n,k}$, such that $\phi^\ell$ is as large as possible
and $\phi^*$ is as small as possible. The formula $\phi^*$ which
meets the upper bound in Proposition~\ref{prop:size-of-theta}
and the formula $\phi^\ell$ are then used to prove
Theorem~\ref{thm:separation}.

Each of the formulas $\phi^\ell$ and $\phi^*$ will be constructed
as a hypergraph formula which simplifies their construction
considerably. The lower bound in Proposition~\ref{prop:size-of-theta}
is valid for an arbitrary subset of $\Psi_{n,k}$. This implies
that using a hypergraph formula $\phi^*$ cannot worsen the upper
bound on $|\phi^*|$ by more than a logarithmic factor compared
to the minimum size for unrestricted formulas.

\begin{definition}[\cite{BBM23a,BBM23b}] \label{def:use-undirected-hg}
For every $(k+1)$-uniform hypergraph $H$ whose vertices are the
variables $X$, the \emph{hypergraph formula} $\theta(H)$ consists of
the clauses $\n(e \setminus \{x\}) \cup \{x\}$ for all hyperedges
$e \in H$ and all variables $x \in e$.
\end{definition}

Clearly, for every $H$ as above,
$\theta(H) \subseteq \Psi_{n,k}$ and $|\theta(H)| = (k+1)|H|$.

\section{A combinatorial characterization}
\label{sec:combinatorial-characterization}

In order to construct the formula $\phi^*$, we are looking for
a formula ucp-equivalent to $\Psi_{n,k}$ and as small as possible.
According to Lemma~\ref{lem:prime} below, we can
consider only prime formulas (containing only prime implicates)
without loss of generality.
Since the formula $\Psi_{n,k}$ is equal to the set of all its
prime implicates, such formulas are subsets of $\Psi_{n,k}$.
We present two combinatorial characterizations of the subsets of
$\Psi_{n,k}$ which are ucp-equivalent to $\Psi_{n,k}$. The first of them
is valid for an arbitrary subset of $\Psi_{n,k}$ and is used for a lower
bound on the size of an arbitrary formula ucp-equivalent to $\Psi_{n,k}$.
The other characterization is valid for subsets of $\Psi_{n,k}$ in the
form of a hypergraph formula $\theta(H)$ defined above and is used
later for a corresponding upper bound.

\begin{lemma} \label{lem:prime}
If $\phi$ is an arbitrary CNF formula ucp-equivalent to $\Psi_{n,k}$,
then there is a prime formula $\phi'$ ucp-equivalent to $\Psi_{n,k}$,
such that $|\phi'| \le |\phi|$ and $\|\phi'\| \le \|\phi\|$.
\end{lemma}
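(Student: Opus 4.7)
The plan is to construct $\phi'$ by replacing every clause of $\phi$ with a prime sub-implicate and then to check that this strengthening preserves ucp-equivalence with $\Psi_{n,k}$. Since $\phi$ is ucp-equivalent to $\Psi_{n,k}$, Proposition~\ref{prop:equivalence} gives that $\phi$ represents $f_{n,k}$, so every clause $C \in \phi$ is an implicate of $f_{n,k}$ and hence contains some prime implicate $C' \subseteq C$ of $f_{n,k}$. Choose one such $C'$ for each $C$, and let $\phi' = \{C' \mid C \in \phi\}$ viewed as a set of clauses. The size bounds $|\phi'| \le |\phi|$ and $\|\phi'\| \le \|\phi\|$ are then immediate, because the map $C \mapsto C'$ is surjective onto $\phi'$ and each $|C'| \le |C|$.

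Since $\Psi_{n,k}$ is the set of all prime implicates of $f_{n,k}$, we have $\phi' \subseteq \Psi_{n,k}$, which immediately gives $\phi' \le_{ucp} \Psi_{n,k}$ (every UCP derivation from $\phi'$ can be replayed on the superset $\Psi_{n,k}$). The reverse direction uses the hypothesis that $\phi$ is ucp-equivalent to $\Psi_{n,k}$, so it suffices to prove $\phi \le_{ucp} \phi'$ and combine. I would prove this by induction on the UCP derivation sequence from $\phi \wedge \alpha$ for an arbitrary partial assignment $\alpha$: assume that up to step $t$, every literal derived from $\phi \wedge \alpha$ is also in $\UCP(\phi' \wedge \alpha)$ (or $\bot$ has already been derived from $\phi' \wedge \alpha$, in which case there is nothing more to show). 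The step $t+1$ derives a literal $l$ via some clause $C \in \phi$, meaning the negations of all literals in $C \setminus \{l\}$ are present. In the corresponding clause $C' \subseteq C$, there are two cases: if $l \in C'$, then $C' \setminus \{l\} \subseteq C \setminus \{l\}$ so UCP on $\phi'$ also derives $l$; if $l \notin C'$, then $C' \subseteq C \setminus \{l\}$, so the negations of all of $C'$ are already present and UCP on $\phi'$ derives $\bot$ from $C'$. Either way the inductive condition is maintained.

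Combining the two directions gives $\Psi_{n,k} \le_{ucp} \phi \le_{ucp} \phi' \le_{ucp} \Psi_{n,k}$, so $\phi'$ is ucp-equivalent to $\Psi_{n,k}$, as required; by construction $\phi'$ consists only of prime implicates of $f_{n,k}$ and is therefore prime. The main conceptual point is the Case~2 observation that strengthening a clause to a prime sub-implicate never \emph{weakens} UCP: either the same literal is derived (possibly sooner), or the shorter clause becomes falsified and contributes a contradiction, which under the definition of ucp-equivalence is still compatible with the original derivation. I do not foresee a real obstacle, only the bookkeeping of the UCP induction.
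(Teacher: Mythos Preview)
Your proof is correct and follows essentially the same approach as the paper: replace each clause of $\phi$ by a prime subimplicate and use the observation that shrinking a clause can only strengthen UCP. The paper packages the upper-bound step differently---it notes that $\Psi_{n,k}$, being the set of all prime implicates of $f_{n,k}$, is propagation complete, hence so is $\phi$, so any strengthening remains ucp-equivalent---whereas you obtain $\phi' \le_{ucp} \Psi_{n,k}$ directly from $\phi' \subseteq \Psi_{n,k}$ and close with the sandwich $\Psi_{n,k} \le_{ucp} \phi \le_{ucp} \phi' \le_{ucp} \Psi_{n,k}$; both arguments are valid and equally short.
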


\begin{proof}
Let $\phi$ be ucp-equivalent to $\Psi_{n,k}$.
By Proposition~\ref{prop:equivalence}, $\phi$ represents the same
function as $\Psi_{n,k}$, so these two formulas have the same set of
prime implicates. Moreover, $\Psi_{n,k}$ is constructed as the set
of all prime implicates of the function (\ref{eq:function}).
Hence, $\Psi_{n,k}$ is propagation complete and, by assumption,
also $\phi$ is propagation complete. This means that UCP on $\phi$
is the strongest possible among formulas representing the function
(\ref{eq:function}).

If $C \in \phi$ is not a prime implicate of $\phi$, then replacing
it by a prime subimplicate can only make UCP stronger. Since $\phi$
is propagation complete, this replacement leads to a formula
ucp-equivalent to $\phi$ and $\Psi_{n,k}$. Repeating this, we
replace all non-prime clauses of $\phi$ by prime implicates and
possibly remove redundant clauses. The resulting formula $\phi'$
satisfies the statement.
\end{proof}

Every clause of $\Psi_{n,k}$ contains $k$ negative literals.
A partial assignment setting at most $k-2$ variables to $1$
and possibly some variables to $0$ simplifies $\Psi_{n,k}$ to a formula where
every clause contains at least $2$ negative literals. This implies
that UCP on $\Psi_{n,k}$ and such a partial assignment does not
derive any new literal and the same is true for every subset
of $\Psi_{n,k}$. It follows that when considering UCP on subsets
of $\Psi_{n,k}$ it is sufficient to consider partial assignments
setting at least $k-1$ variables to $1$ and possibly some
variables to $0$.

Moreover, it appears that in order to
characterize subsets $\phi$ of $\Psi_{n,k}$ that are ucp-equivalent
to $\Psi_{n,k}$, it is reasonable to split such an assignment
into an initial setting of exactly $k-1$ variables to $1$,
determine the set of binary clauses in the corresponding simplified formula,
and then consider a further setting of one other variable,
say $x_\ell$, to $0$ or $1$. Depending on whether $x_\ell$ is
set to $0$ or $1$, UCP on $\phi$ should derive all negative or
all positive literals on the remaining variables, since this is
true for UCP on the formula $\Psi_{n,k}$.

The binary clauses obtained by simplification after setting of
exactly $k-1$ variables
to $1$ necessarily contain one positive and one negative
literal and, hence, represent an implication. We represent this
set of implicatins by the following directed graph.

\begin{definition} \label{def:G-directed}
Assume $\phi \subseteq \Psi_{n,k}$ and a $(k-1)$-set $A \subseteq X$.
Let $G^d(\phi, A)$ be the directed graph on $X \setminus A$ defined as
$$
G^d(\phi, A) = \{(x_i, x_j) \mid \n(A) \cup \{\neg x_i, x_j\} \in \phi\} \;.
$$
\end{definition}

Let $\phi$ and $A$ be as in Definition~\ref{def:G-directed} and
let $\alpha = \bigwedge_{x \in A} x$ be a partial assignment
setting all variables in $A$ to $1$. Then UCP on $\phi \wedge \alpha$
derives no additional literals, since $\alpha$ extended by literals
of the same polarity (positive or negative) on all the remaining
variables is a satisfying assignment of $\Psi_{n,k}$ and $\phi$.
If also a variable not in $A$ is set, then UCP can be characterized by
$G^d(\phi, A)$ as follows.

\begin{lemma} \label{lem:directed-graph-G-A}
Assume $\phi \subseteq \Psi_{n,k}$, a $(k-1)$-set $A \subseteq X$, and
let $\alpha = \bigwedge_{x \in A} x$.
Consider a partial assignment $\beta \in \{\neg x_\ell, x_\ell\}$
setting a variable $x_\ell \in X \setminus A$ to $0$ (the negative
literal) or to $1$ (the positive literal).
Then $\UCP(\phi \wedge \alpha \wedge \beta)$ is consistent and we have
\begin{itemize}
\item
$\UCP(\phi \wedge \alpha \wedge \neg x_\ell)$ extends $\alpha$ precisely
by setting to $0$ the variables $x_i$ such that $G^d(\phi, A)$ contains
a path from $x_i$ to $x_\ell$.
\item
$\UCP(\phi \wedge \alpha \wedge x_\ell)$ extends $\alpha$ at least by
setting to $1$ the variables $x_i$ such that $G^d(\phi, A)$ contains
a path from $x_\ell$ to $x_i$.
\end{itemize}
\end{lemma}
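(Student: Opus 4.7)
The plan is to analyze the formula $\phi$ after it has been simplified by $\alpha$, and then reason about UCP on this simplified formula separately for $\beta = \neg x_\ell$ and $\beta = x_\ell$. The key observation is that every clause of $\Psi_{n,k}$ contains exactly one positive literal and $k$ negative literals, which severely restricts what UCP can do.

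First I would describe the simplified formula $\phi' = \phi[\alpha]$. A clause $C \in \phi$ has the form $\n(S) \cup \{x_j\}$ with $|S|=k$, $x_j \notin S$. After simplifying by $\alpha = \bigwedge_{x \in A} x$: if $x_j \in A$, the clause is satisfied and removed; otherwise the surviving clause is $\n(S \setminus A) \cup \{x_j\}$ of length $k - |S \cap A| + 1 \ge 2$. The length-$2$ clauses arise exactly when $S \supseteq A$, i.e.\ $S = A \cup \{x_i\}$ with $x_i \notin A \cup \{x_j\}$, and the resulting binary clause $\neg x_i \vee x_j$ lies in $\phi'$ iff $(x_i, x_j) \in G^d(\phi, A)$. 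All other surviving clauses have length $\ge 3$, contain at least two negative literals and exactly one positive literal, all on variables outside $A$.

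Next I would handle the case $\beta = \neg x_\ell$. I claim that during UCP only binary clauses can ever fire. Any clause that UCP uses must have all but one literal already falsified. A negative literal $\neg x_i$ is falsified only if $x_i$ has been set to $1$, and the positive literal $x_j$ is falsified only if $x_j$ has been set to $0$. Starting from $\neg x_\ell$, a simple induction shows that UCP on $\phi'$ only produces negative literals: if at some stage we have derived only negative literals, then no negative literal in a clause is falsified, so the only clauses that can fire are those with at most one negative literal, namely the binary clauses, and these produce another negative literal. Hence UCP reduces to backward reachability in $G^d(\phi, A)$ from $x_\ell$, giving precisely the first bullet; consistency is automatic because only negative literals are derived.

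For $\beta = x_\ell$ the symmetric argument shows that UCP derives only positive literals: deriving $\neg x_i$ would require falsifying the single positive literal of some clause, which never happens when no variable is ever set to $0$. Hence the derived set is consistent. Forward propagation along the binary clauses $\neg x_i \vee x_j$ of $\phi'$ (which are the edges of $G^d(\phi, A)$) shows that every $x_i$ reachable from $x_\ell$ by a directed path in $G^d(\phi, A)$ is set to $1$, yielding the lower bound in the second bullet. The reason the statement only says ``at least'' is that once sufficiently many variables have been set to $1$, the longer simplified clauses (length $\ge 3$) may also fire and produce additional positive literals, and these extra derivations are not captured by $G^d(\phi, A)$.

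There is no real obstacle; the only step requiring care is the structural description of $\phi[\alpha]$ together with the polarity-invariance argument for UCP. Once it is observed that in the first case UCP stays within negative literals and in the second case within positive literals, the role of $G^d(\phi, A)$ is immediate and both bullets follow.
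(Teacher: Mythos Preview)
Your proposal is correct and follows essentially the same approach as the paper: describe the simplification $\phi[\alpha]$ as a definite Horn formula on $X\setminus A$ whose binary part is exactly the edge set of $G^d(\phi,A)$, then use the polarity-preservation observation to conclude that for $\beta=\neg x_\ell$ only the binary clauses can fire (giving exact backward reachability), while for $\beta=x_\ell$ the binary clauses already witness forward reachability (giving the ``at least'' bound). The only minor difference is that the paper establishes consistency semantically via the function $f_{n,k}$, whereas you obtain it as a byproduct of the polarity-invariance argument; both are valid.
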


\begin{proof}
By (\ref{eq:function}), setting variables in $A$ to $1$ and all
variables in $X \setminus A$ to $0$ or all of them to $1$ is
consistent with $f_{n,k}$. Hence, it is consistent also with
the formula $\Psi_{n,k}$ representing $f_{n,k}$ and its subset
$\phi$. It follows that $\UCP(\phi \wedge \alpha \wedge \beta)$
is consistent.

When simplifying a subset of $\Psi_{n,k}$ using $\alpha$, the clauses
containing a positive literal on a variable from $A$ become satisfied and,
hence, are removed. Since $|A|=k-1$ and every clause of $\phi$ contains
$k$ negative literals, the clauses remaining in the
simplified formula contain one positive
and at least one negative literal. In particular, this is a definite
Horn formula without unit clauses. In such a formula, setting
a variable $x_\ell \in X \setminus A$ to $0$ can derive a value of
a new variable by UCP only using a clause containing the positive literal
$x_\ell$ and exactly one negative literal. This negative literal
is the derived one. Since the derived value of the variable in this
literal is $0$, UCP can derive only negative literals even if the
propagation is iterated.

Simplifying a clause $C \in \phi$ by $\alpha$ produces a clause with
one negative literal only if $C=\n(A) \cup \{\neg x_i, x_j\}$ for some
$i,j$ and the resulting clause is $\{\neg x_i, x_j\}$. Hence, the
clauses of the formula $\phi$ simplified by $\alpha$ which are
relevant for propagating assignments of variables
in $X \setminus A$ to $0$ form the 2-CNF formula
$$
\phi_A = \{\{\neg x_i, x_j\} \mid (x_i, x_j) \in G^d(\phi, A)\} \;.
$$
If $x_j=0$, then UCP on $\phi_A$ derives $x_i=0$ in one step
if and only if $\{\neg x_i, x_j\} \in \phi_A$ and this is
equivalent to $(x_i, x_j) \in G^d(\phi, A)$. Hence, propagation of
zeros in $\phi_A$ is precisely represented by the reverse reachability
in $G^d(\phi, A)$ which implies the statement concerning
$\UCP(\phi \wedge \alpha \wedge \neg x_\ell)$.

The statement concerning $\UCP(\phi \wedge \alpha \wedge x_\ell)$ is clear,
since $\phi_A$ is a subset of the formula $\phi$ simplified by $\alpha$
and the clauses of $\phi_A$ allow propagation of values $1$ along all
directed paths in $G^d(\phi, A)$.
\end{proof}

\begin{proposition} \label{prop:propagation}
A formula $\phi \subseteq \Psi_{n,k}$ is ucp-equivalent
to $\Psi_{n,k}$ if and only if for every $(k-1)$-set $A \subseteq X$
the graph $G^d(\phi, A)$ is a strongly connected graph on $X \setminus A$.
\end{proposition}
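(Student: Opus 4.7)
The plan is to leverage Lemma~\ref{lem:directed-graph-G-A} together with the preceding remark that only partial assignments setting at least $k-1$ variables to $1$ can trigger nontrivial UCP on subsets of $\Psi_{n,k}$, and to exploit the fact that $\Psi_{n,k}$ itself is propagation complete (since it is the set of all prime implicates of $f_{n,k}$).

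For the forward direction, I would fix an arbitrary $(k-1)$-subset $A$, set $\alpha = \bigwedge_{x \in A} x$, and pick any $x_\ell \in X \setminus A$. Since $\alpha \wedge \neg x_\ell$ contributes $k-1$ ones and one zero, every model of $f_{n,k}$ extending it must satisfy $\sum_{i=1}^n x_i < k$, forcing every remaining variable to $0$. Propagation completeness of $\Psi_{n,k}$ then implies that $\UCP(\Psi_{n,k} \wedge \alpha \wedge \neg x_\ell)$ contains all these negative literals, and ucp-equivalence transfers this conclusion to $\phi \wedge \alpha \wedge \neg x_\ell$. By Lemma~\ref{lem:directed-graph-G-A}, this says every $x_i \in X \setminus A$ has a directed path to $x_\ell$ in $G^d(\phi, A)$, and since $x_\ell$ was arbitrary, $G^d(\phi, A)$ is strongly connected.

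For the converse, the inclusion $\UCP(\phi \wedge \alpha) \subseteq \UCP(\Psi_{n,k} \wedge \alpha)$ holds for every $\alpha$ by monotonicity in the clause set. For the reverse inclusion, by the preceding remark I only need to consider $\alpha$ fixing at least $k-1$ variables to $1$, and I would write such an $\alpha$ in the form $\alpha_A \wedge \beta \wedge \gamma'$, where $\alpha_A$ assigns some $(k-1)$-subset $A$ to $1$, $\beta$ is a single additional literal on some $x_\ell \notin A$, and $\gamma'$ collects the remaining literals (the case where no literal on $X \setminus A$ is present reduces to $\alpha = \alpha_A$, for which both UCP computations derive nothing). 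Lemma~\ref{lem:directed-graph-G-A} combined with strong connectivity of $G^d(\phi, A)$ then shows that UCP on $\phi \wedge \alpha_A \wedge \beta$ already propagates the polarity of $\beta$ to every variable of $X \setminus A$; the extra literals of $\gamma'$ are then either confirmed by the propagation or produce a contradiction in exactly the same way on $\phi$ as on $\Psi_{n,k}$.

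The main obstacle is the bookkeeping in the converse direction, specifically the reduction of a general relevant partial assignment to the canonical form $\alpha_A \wedge \beta$ and the verification that once UCP has been driven by the single literal $\beta$ it matches the derivations of $\Psi_{n,k}$ on the full $\alpha$. Once that reduction is in place, the rest is a direct translation between Lemma~\ref{lem:directed-graph-G-A} and the statement of the proposition.
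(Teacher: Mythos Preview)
Your forward direction is essentially the paper's argument: fix $A$, use that $\Psi_{n,k}$ is propagation complete to see that $\UCP(\Psi_{n,k}\wedge\alpha\wedge\neg x_\ell)$ forces every variable of $X\setminus A$ to $0$, transfer this to $\phi$ by ucp-equivalence, and read off strong connectivity from Lemma~\ref{lem:directed-graph-G-A}. The paper phrases it as ``$G^d(\Psi_{n,k},A)$ is the complete digraph, and ucp-equivalence forces $G^d(\phi,A)$ to have the same transitive closure'', but the content is identical.

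Your converse direction is correct but takes a different route from the paper. You verify $\UCP(\Psi_{n,k}\wedge\alpha)\subseteq\UCP(\phi\wedge\alpha)$ directly for every relevant $\alpha$ via the decomposition $\alpha=\alpha_A\wedge\beta\wedge\gamma'$, which is sound: once strong connectivity and Lemma~\ref{lem:directed-graph-G-A} give you that $\UCP(\phi\wedge\alpha_A\wedge\beta)$ already assigns every variable of $X\setminus A$ the polarity of $\beta$, monotonicity of UCP in the input literals makes the $\gamma'$ literals either redundant or contradiction-producing, matching $\Psi_{n,k}$ in each case. The paper instead uses the absorption criterion: since $\phi\subseteq\Psi_{n,k}$ gives $\phi\le_{ucp}\Psi_{n,k}$ for free, it suffices to show every clause $C\in\Psi_{n,k}$ is absorbed by $\phi$, i.e.\ that $\UCP(\phi\wedge\neg(C\setminus\{l\}))$ derives $l$ for each $l\in C$. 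The point is that $\neg(C\setminus\{l\})$ is automatically of the form $\alpha_A\wedge\beta$ with a single extra literal $\beta$, so the $\gamma'$ bookkeeping you flag as the main obstacle simply does not arise. Your approach is more elementary in that it avoids the absorption machinery, while the paper's approach is cleaner because absorption reduces the quantification over all partial assignments to exactly the canonical ones where Lemma~\ref{lem:directed-graph-G-A} applies without residue.
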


\begin{proof}
Assume, $\phi \subseteq \Psi_{n,k}$ is ucp-equivalent
to $\Psi_{n,k}$. Let $A$ be any $(k-1)$-subset of $X$ and let
$\alpha = \bigwedge_{x \in A} x$.
By Lemma \ref{lem:directed-graph-G-A}, reachability
of the vertices in the graphs $G^d(\Psi_{n,k}, A)$ and $G^d(\phi, A)$
represent propagating assignments of the variables in $X \setminus A$
to $0$ in the formulas $\Psi_{n,k} \wedge \alpha$ and
$\phi \wedge \alpha$, respectively. By assumption, these formulas
are ucp-equivalent, so the two graphs have the same
transitive closure. Since the graph $G^d(\Psi_{n,k}, A)$ is the complete
directed graph on $X \setminus A$, the graph $G^d(\phi, A)$
is strongly connected.

Assume that $G^d(\phi, A)$ is strongly connected.
Let $C \in \Psi_{n,k} \setminus \phi$ and let us prove that $C$
is absorbed by $\phi$. Let $l \in C$ and consider a partial
assignment $\gamma$ of the variables falsifying all the literals
in $C \setminus \{l\}$. We have to show that UCP on
$\phi \wedge \gamma$ derives $l$.

If $l$ is positive, let $g \in C \setminus \{l\}$
be arbitrary. If $l$ is negative, let $g$ be the positive literal
contained in $C$. For a suitable $(k-1)$-set of variables $A$,
we have $C = \n(A) \cup \{g, l\}$ and $g$ and $l$ have the opposite sign.
Since $\gamma$ falsifies all the literals in $\n(A)$ and $g$, we have
$\gamma = \alpha \wedge \neg g$ where $\alpha = \bigwedge_{x \in A} x$.

Assume that the variable in $g$ is $x_i$ and the variable in $l$ is $x_j$.
Hence, we have either $g=x_i$ and $l=\neg x_j$ or $g=\neg x_i$ and $l=x_j$.
By assumption, $G^d(\phi, A)$ contains paths between $x_i$ and $x_j$
in both directions. If $g=x_i$, then $\gamma = \alpha \wedge \neg x_i$
and Lemma~\ref{lem:directed-graph-G-A}
implies that UCP on $\phi \wedge \gamma$ derives $\neg x_j = l$.
If $g=\neg x_i$, then $\gamma = \alpha \wedge x_i$
and Lemma~\ref{lem:directed-graph-G-A}
implies that UCP on $\phi \wedge \gamma$ derives $x_j=l$.

We obtained that every clause $C \in \Psi_{n,k} \setminus \phi$
is absorbed by $\phi$. It follows that $\Psi_{n,k}$ is absorbed by $\phi$.
Since $\phi$ is a subset of $\Psi_{n,k}$, it is ucp-equivalent
to $\Psi_{n,k}$.
\end{proof}

\begin{proposition} \label{prop:lower-bound}
The size (number of clauses) of an arbitrary CNF formula ucp-equivalent
to $\Psi_{n,k}$ is at least $n\, C(n-1, k, k-1)$.
\end{proposition}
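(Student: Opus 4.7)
The plan is to reduce the counting of clauses in $\phi$ to $n$ instances of the covering number $C(n-1, k, k-1)$, one per variable serving as the positive literal.

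First, by Lemma~\ref{lem:prime} I may assume without loss of generality that $\phi$ is prime, and since $\Psi_{n,k}$ equals the set of its own prime implicates, this means $\phi \subseteq \Psi_{n,k}$. Every clause of $\phi$ then has the form $\n(B) \cup \{x_j\}$ for some $k$-set $B \subseteq X$ and some $x_j \in X \setminus B$; the positive literal is unique, so if for each $j$ I let
\begin{equation*}
\mathcal{B}_j = \{B \subseteq X \setminus \{x_j\} : |B|=k,\ \n(B) \cup \{x_j\} \in \phi\},
\end{equation*}
then $|\phi| = \sum_{j=1}^n |\mathcal{B}_j|$.

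Next I would invoke Proposition~\ref{prop:propagation}: for every $(k-1)$-set $A \subseteq X$ the graph $G^d(\phi,A)$ is strongly connected on $X \setminus A$. In particular, for every $x_j \in X \setminus A$ there exists $x_i \in X \setminus (A \cup \{x_j\})$ with $(x_i, x_j) \in G^d(\phi,A)$, which by Definition~\ref{def:G-directed} is exactly the assertion that $\n(A \cup \{x_i\}) \cup \{x_j\} \in \phi$, i.e., $A \cup \{x_i\} \in \mathcal{B}_j$. Fixing $j$ and varying $A$ over all $(k-1)$-subsets of $X \setminus \{x_j\}$, this says precisely that every $(k-1)$-subset of the $(n-1)$-set $X \setminus \{x_j\}$ is contained in some member of $\mathcal{B}_j$. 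Hence $\mathcal{B}_j$ is a covering system of the type defining $C(n-1, k, k-1)$, so $|\mathcal{B}_j| \ge C(n-1, k, k-1)$.

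Summing over the $n$ choices of $j$ yields $|\phi| \ge n\, C(n-1, k, k-1)$, completing the argument. The proof is essentially a double-counting/decomposition argument, and there is no real obstacle beyond checking that the incoming-edge requirement translates verbatim to a covering condition on $\mathcal{B}_j$; the only subtlety worth stating carefully is that the out-degree condition of strong connectivity is not needed — the in-degree condition alone (together with the uniqueness of the positive literal) already gives the tight per-vertex covering lower bound whose sum matches the claimed bound.
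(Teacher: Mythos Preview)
Your proof is correct and essentially identical to the paper's: both reduce to primality via Lemma~\ref{lem:prime}, partition the clauses by their unique positive literal into families $\mathcal{B}_j$ (the paper calls them $\mathcal{S}_x$), use the in-degree condition from Proposition~\ref{prop:propagation} to show each family covers all $(k-1)$-subsets of $X\setminus\{x_j\}$, and sum. Your closing remark that only the in-degree part of strong connectivity is used matches exactly what the paper does.
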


\begin{proof}
Let $\phi^*$ be a smallest CNF formula ucp-equivalent to $\Psi_{n,k}$.
By Lemma \ref{lem:prime}, without
loss of generality, we can assume that $\phi^*$ consists only of its
prime implicates and, hence, is a subset of $\Psi_{n,k}$.
For every variable $x \in X$, let
$$
\mathcal{S}_x = \{B \mid \n(B) \vee x \in \phi^*\} \;.
$$
Clearly, $\mathcal{S}_x$ is a system of $k$-sets
$B \subseteq X \setminus \{x\}$.

Let $x \in X$ and let $A$ be an arbitrary $(k-1)$-subset
of $X \setminus \{x\}$.
By Proposition~\ref{prop:propagation}, vertex $x$ has non-zero in-degree
in the graph $G^d(\phi^*, A)$, so there is a vertex $y \in X \setminus A$
different from $x$ such that $(y, x)$ is an edge of the graph.
By construction of $G^d(\phi^*, A)$, $\n(A) \cup \{\neg y, x\}$ is
a clause of $\phi^*$. The set $B = A \cup \{y\}$ is a member of
$\mathcal{S}_x$ and $A \subseteq B$. It follows that $\mathcal{S}_x$
is a system
of $k$-subsets of an $(n-1)$-set covering all $(k-1)$-subsets.
Hence, $|\mathcal{S}_x| \ge C(n-1,k,k-1)$ and, since
$|\phi^*| = \sum_{x \in X} |\mathcal{S}_x|$, the statement follows.
\end{proof}

Let us recall that both the formulas $\phi^\ell$ and $\phi^*$
needed for the main result will be constructed as hypergraph
formulas.
Proposition~\ref{prop:propagation} characterizes general subsets
of $\Psi_{n,k}$ which are ucp-equivalent to the whole formula
using the directed graphs $G^d(\phi, A)$. It appears that for
hypergraph formulas $\theta(H) \subseteq \Psi_{n,k}$ the following
simpler condition using an undirected graph is sufficient for the
same purpose.

\begin{definition} \label{def:conn-restriction}
We say that a $(k+1)$-uniform hypergraph $H$ on the set of vertices $X$
has connected restrictions, if for every $(k-1)$-set $A \subseteq X$,
the graph
\begin{equation} \label{eq:hypergraph-restriction}
G(H, A) = \{e \setminus A \mid A \subseteq e \in H\}
\end{equation}
is a connected graph on the set $X \setminus A$.
\end{definition}

\begin{proposition} \label{prop:comb-reformulation}
If $H$ is a $(k+1)$-uniform hypergraph on $X$, then $\theta(H)$ is
ucp-equivalent to $\Psi_{n,k}$ if and only if $H$ has connected restrictions.
\end{proposition}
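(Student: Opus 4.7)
The plan is to reduce directly to Proposition~\ref{prop:propagation} by showing that for every $(k-1)$-set $A \subseteq X$ the directed graph $G^d(\theta(H), A)$ is exactly the symmetric orientation of the undirected graph $G(H, A)$, after which strong connectivity of the former is equivalent to connectivity of the latter. The central observation is that the construction of $\theta(H)$ is symmetric in the vertices of each hyperedge: for $e \in H$, the clause $\n(e \setminus \{x\}) \cup \{x\}$ is added for every $x \in e$, so all arcs in $G^d(\theta(H), A)$ that arise from a fixed hyperedge come in reversed pairs.

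First I would fix an arbitrary $(k-1)$-set $A \subseteq X$ and two distinct variables $x_i, x_j \in X \setminus A$. Unfolding Definition~\ref{def:use-undirected-hg}, a clause of the form $\n(A) \cup \{\neg x_i, x_j\}$ has $k$ negative literals on the variables of $A \cup \{x_i\}$ and the single positive literal $x_j$, so it lies in $\theta(H)$ if and only if there exists a hyperedge $e \in H$ with $e \setminus \{x_j\} = A \cup \{x_i\}$, equivalently $e = A \cup \{x_i, x_j\}$ (a genuine $(k+1)$-set since $x_i \neq x_j$ and neither is in $A$). Combined with Definition~\ref{def:G-directed} this gives
\[
(x_i, x_j) \in G^d(\theta(H), A) \Leftrightarrow A \cup \{x_i, x_j\} \in H.
\]
Because the right-hand side is symmetric in $x_i$ and $x_j$, the same equivalence also holds for $(x_j, x_i)$, and by (\ref{eq:hypergraph-restriction}) the undirected edge $\{x_i, x_j\}$ belongs to $G(H, A)$ under exactly the same condition. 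Hence $G^d(\theta(H), A)$ is obtained from $G(H, A)$ by replacing each undirected edge by a pair of oppositely oriented arcs.

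Since such a symmetric digraph is strongly connected if and only if its underlying undirected graph is connected, Proposition~\ref{prop:propagation} implies that $\theta(H)$ is ucp-equivalent to $\Psi_{n,k}$ iff $G^d(\theta(H), A)$ is strongly connected for every $(k-1)$-set $A$, iff $G(H, A)$ is connected for every such $A$, iff $H$ has connected restrictions. The argument is essentially a careful unfolding of definitions; the only step deserving attention is the bookkeeping between the positive and negative literals of a clause in $\theta(H)$ and the corresponding hyperedge of $H$, and there is no substantive obstacle beyond that.
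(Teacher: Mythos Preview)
Your proposal is correct and follows essentially the same approach as the paper's own proof: both reduce to Proposition~\ref{prop:propagation} by showing that for every $(k-1)$-set $A$ the directed graph $G^d(\theta(H),A)$ is the symmetric orientation of $G(H,A)$, whence strong connectivity of the former is equivalent to connectivity of the latter. Your version is slightly more explicit in unfolding the correspondence between clauses of $\theta(H)$ and hyperedges of $H$, but the structure and key observation are identical.
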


\begin{proof}
Since $\theta(H)$ is a subset of $\Psi_{n,k}$, we can use the
characterization of formulas ucp-equivalent to $\Psi_{n,k}$ from
Proposition~\ref{prop:propagation}.

Let $A$ be an arbitrary $(k-1)$-subset of $X$. By construction
of $\theta(H)$, if $G^d(\theta(H), A)$
contains the directed edge $(x_i, x_j)$, then it also contains the
directed edge $(x_j, x_i)$. Moreover, these two edges
are contained in $G^d(\theta(H), A)$ if and only if $G(H, A)$
contains the undirected edge $\{x_i, x_j\}$. Hence, the graph
$G^d(\theta(H), A)$ is strongly connected if and only if $G(H, A)$ is
connected. Since this is satisfied for arbitrary $(k-1)$-subset
$A \subseteq X$, the statement follows.
\end{proof}

Note that $\Psi_{n,k}$ is a hypergraph formula, since it is $\theta(H_0)$,
where $H_0$ is the complete $(k+1)$-uniform hypergraph on $X$.
The ucp-irredundant
formula $\phi^\ell$ can be obtained from an easy to define subset of $H_0$.

\begin{definition} \label{def:phi-ell}
Let $H_1$ consist of all $(k+1)$-subsets of $X$ containing the
variable $x_1$ and let $\phi^\ell=\theta(H_1)$.
\end{definition}

The proportion of the $(k+1)$-subsets of $X$ containing the
variable $x_1$ in the set of all these subsets is $(k+1)/n$.
For the main result, we assume $k = n/2 + O(1)$, so the size
of $\phi^\ell$ is roughly one half of the size of $\Psi_{n,k}$.

\begin{proposition} \label{prop:phi-ell}
Formula $\phi^\ell$ is ucp-irredundant and ucp-equivalent to $\Psi_{n,k}$.
\end{proposition}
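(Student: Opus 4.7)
The plan is to prove both claims using the combinatorial characterizations of Propositions~\ref{prop:comb-reformulation} and \ref{prop:propagation}. The ucp-equivalence will follow directly from the hypergraph characterization, while the ucp-irredundancy requires arguing with the directed graph $G^d$ since removing a single clause from $\theta(H_1)$ no longer leaves a hypergraph formula.

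For ucp-equivalence, I would verify that $H_1$ has connected restrictions and invoke Proposition~\ref{prop:comb-reformulation}. Fix an arbitrary $(k-1)$-set $A\subseteq X$. If $x_1\in A$, then every $(k+1)$-set $e\supseteq A$ automatically contains $x_1$ and hence belongs to $H_1$, so $G(H_1,A)$ is the complete graph on $X\setminus A$. If $x_1\notin A$, then $e\supseteq A$ lies in $H_1$ exactly when $e=A\cup\{x_1,z\}$ for some $z\in X\setminus(A\cup\{x_1\})$, so $G(H_1,A)$ is the star on $X\setminus A$ centered at $x_1$. In both cases the graph is connected.

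For ucp-irredundancy, the key observation is that the directed edge $(x_i,x_j)$ belongs to $G^d(\theta(H_1),A)$ exactly when $A\cup\{x_i,x_j\}\in H_1$, i.e.\ when $x_1\in A\cup\{x_i,x_j\}$. Hence when $x_1\notin A$, the graph $G^d(\phi^\ell,A)$ is the bidirected star centered at $x_1$ on $X\setminus A$; removing any single edge of this star destroys strong connectivity (either a vertex loses its only outgoing edge or $x_1$ loses its only edge to some vertex). Given an arbitrary clause $C=\n(e\setminus\{x\})\cup\{x\}$ with $e\in H_1$ and $x\in e$, I would select $A$ as follows: if $x=x_1$, pick any $y\in e\setminus\{x_1\}$ and set $A=e\setminus\{x_1,y\}$, so that $C$ corresponds to the directed edge $(y,x_1)$; if $x\neq x_1$, take $y=x_1$ and $A=e\setminus\{x,x_1\}$, so that $C$ corresponds to the directed edge $(x_1,x)$. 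In both constructions $x_1\notin A$, the star structure applies, and the removed edge is a spoke whose deletion leaves $G^d(\phi^\ell\setminus\{C\},A)$ not strongly connected. By Proposition~\ref{prop:propagation} this means $\phi^\ell\setminus\{C\}$ is not ucp-equivalent to $\Psi_{n,k}$, hence not ucp-equivalent to $\phi^\ell$.

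The only mild subtlety — and the step I would check most carefully — is that $\phi^\ell\setminus\{C\}$ remains a subset of $\Psi_{n,k}$ so Proposition~\ref{prop:propagation} indeed applies, and that my choice of $A$ is always a valid $(k-1)$-set disjoint from $x_1$ (which is guaranteed because $|e\setminus\{x,y\}|=k-1$ and the case split ensures $x_1\notin\{x,y\}^c\cap e$ fails in the right way). No hard combinatorics is needed; the construction essentially reduces everything to the trivial observation that a star graph falls apart as soon as one spoke is removed.
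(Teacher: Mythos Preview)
Your argument is correct. The ucp-equivalence half is identical to the paper's. For ucp-irredundancy, however, you and the paper take different routes. The paper exhibits, for each clause $C\in\phi^\ell$, an explicit partial assignment $\gamma$ setting the $k$ variables of $C$ other than $x_1$ so that all their literals in $C$ are falsified, and then argues directly that every remaining clause of $\phi^\ell$ is either already satisfied by $\gamma$ (if it uses the same variables as $C$) or still contains at least two unassigned variables (namely $x_1$ and some variable outside $C$), so UCP on $\phi^\ell\setminus\{C\}$ with $\gamma$ is stuck. You instead stay inside the combinatorial framework: for each $C$ you select a $(k-1)$-set $A$ with $x_1\notin A$ so that $C$ corresponds to a single spoke of the bidirected star $G^d(\phi^\ell,A)$, and then invoke Proposition~\ref{prop:propagation} on the subformula $\phi^\ell\setminus\{C\}\subseteq\Psi_{n,k}$. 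Your approach is tidier in that it reuses the characterization already established and reduces the check to the trivial fact that deleting one directed spoke from a bidirected star destroys strong connectivity; the paper's approach is more self-contained (no appeal back to Proposition~\ref{prop:propagation}) and makes the specific UCP failure explicit. Both are equally elementary.
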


\begin{proof}
The hypergraph $H_1$ has connected restrictions, since for every $(k-1)$-set
$A$ not containing $x_1$, the graph $G(H_1,A)$ is a star on $X\setminus A$
whose central vertex is $x_1$. If $x_1 \in A$, then $G(H_1,A)$ is
a complete graph on $X\setminus A$. It follows
that $\phi^\ell$ is ucp-equivalent to $\Psi_{n,k}$ by
Proposition~\ref{prop:comb-reformulation}.

Let us prove that $\phi^\ell$ is ucp-irredundant. For this purpose,
let $C \in \phi^\ell$ be arbitrary and let us prove that
$\phi^\ell \setminus \{C\}$ is not ucp-equivalent to $\Psi_{n,k}$.
Let $V$ be the set of variables used in $C$ except of $x_1$,
so we have $|V|=k$. Let $\gamma$ be the partial assignment of the
variables in $V$ such that all the literals on variables from $V$ in $C$
are falsified. UCP on the clause $C$ together with $\gamma$ derives
a value of $x_1$ which makes the literal on $x_1$ in $C$ satisfied.
Since $C$ is an implicate of $\Psi_{n,k}$, every formula
ucp-equivalent to $\Psi_{n,k}$ should derive the same value of $x_1$
as $C$. The proof will be finished by proving that
$\phi^\ell \setminus \{C\}$ does not satisfy this.

Depending on whether $C$ contains the literal $x_1$ or $\neg x_1$,
the partial assignment $\gamma$ assigns all variables in $V$ the
value $1$ or all variables in $V$ are assigned the value $1$ except
of one which is assigned $0$. In both cases, the following is true.
Every clause $D$ in $\phi^\ell \setminus \{C\}$ on the same set of
variables as $C$ is satisfied by $\gamma$. The reason is that two
of the variables
in $C$ have different sign in $C$ and $D$. At least one of them is
not $x_1$. The literal with this variable in $C$ is falsified, so
its negation in $D$ is satisfied.
Every clause in $\phi^\ell \setminus \{C\}$ on a set of variables
different from the set of variables of $C$ contains at
least two variables not assigned by $\gamma$, namely $x_1$ and some
variable not in $V \cup \{x_1\}$. Such a clause cannot
be used for UCP. Altogether, UCP on
$\phi^\ell \setminus \{C\}$ with the assignment $\gamma$
does not derive a value of any new variable, in particular,
a value of $x_1$.
\end{proof}

Construction of a hypergraph $H$ such that $\phi^* = \theta(H)$
is a sufficiently small formula ucp-equivalent to $\Psi_{n,k}$ is
more complicated
and we use results proven in the next two sections for this purpose.

\section{Union of random permutations of a graph}
\label{sec:random-permutations}

In the next section we prove the existence of a sufficiently small
$(k+1)$-uniform hypergraph on the set $X$ with connected restrictions,
so that Proposition~\ref{prop:comb-reformulation} then allows to
prove existence of the required formula $\phi^*$. Using known results on
covering systems, it is possible to construct a hypergraph $H$ satisfying
a weaker condition, namely, that the graph $G(H, A)$ has no isolated
vertices for every $A \subseteq X$ of size $k-1$. A possible way
how to extend a graph with no isolated vertices to a connected
graph is to take a union of a constant number of its copies obtained by
a random permutation of the vertices.
The purpose of this section is to prove a small upper bound on the
probability that this union fails to be connected.
If this probability is small enough, then the process can be used
in parallel for almost all sets $A$ which we consider and allows
to prove the existence of the required hypergraph.

The following definition
will be used in the next section for hypergraphs and this section
uses its special case for graphs which are $2$-uniform hypergraphs.

\begin{definition} \label{def:permutation}
Let $H$ be a hypergraph on a set of vertices $V$. Consider
a permutation $\pi:V \to V$. We denote
by $\pi(H)$ the hypergraph on the set of vertices $V$
such that for every set $e \subseteq V$, $\pi(e)$ is a hyperedge
of $\pi(H)$ if and only if $e$ is a hyperedge of $H$.
\end{definition}

Intuitively, graph or hypergraph $\pi(H)$ is obtained by moving each
vertex $u$ of $H$ to $\pi(u)$ and moving the edges correspondingly.
By a random permutation of $H$, we mean $\pi(H)$, where $\pi$ is
chosen at random from the uniform distribution on permutations of $V$.

An undirected graph $G=(V,E)$ is connected if and only if for every
partition of $V$ into non-empty sets $Z$ and $V \setminus Z$, there
is an edge connecting a vertex in $Z$ and a vertex in $V \setminus Z$.
We need an upper bound on the probability of the complement
of this event for a fixed set $Z \subseteq V$ and a graph obtained
as a random permutation of a given graph. Equivalently, we can
consider a fixed graph and a random set $Z$ of a given size. Clearly,
there is no edge between $Z$ and $V \setminus Z$ if and only if $Z$
is a union of the connected components of the graph.

Assume a graph on the set of vertices $V$ of size $m$ with
no isolated vertices and with $r$ connected components of sizes
$a_i \ge 2$, $i=1,\ldots,r$.
The following lemma provides an upper bound on the number of
subsets $Z$ of $V$ of size $d$ which are unions of connected
components of the graph. In the simple case when $m$ and $d$ are even
and $a_i=2$ for $i=1,\ldots,m/2$, the number of these sets is exactly
${m/2 \choose d/2}$. In the general case, a similar estimate can
be obtained. The considered subsets of $V$ are encoded by
the sets $I$ satisfying (\ref{eq:sum-equal-d}). Since the sets $I$
are pairwise incomparable we can use LYM inequality \cite{Lub66} implying an
upper bound on the number of sets of different sizes in a collection
of pairwise incomparable sets.

\begin{lemma}[\cite{Sil23}] \label{lem:integer-sums}
Assume integers $a_i \ge 2$ for $i=1,\ldots, r$ and let $m=\sum_{i=1}^r a_i$.
Then for every $2 \le d \le \lfloor m/2 \rfloor$, the number of sets
$I \subseteq \{1, \ldots, r\}$, such that
\begin{equation} \label{eq:sum-equal-d}
\sum_{i \in I} a_i = d
\end{equation}
is at most
$$
{\lfloor m/2 \rfloor \choose \lfloor d/2 \rfloor} \;.
$$
\end{lemma}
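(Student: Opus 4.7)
The plan is to apply the LYM inequality, as the excerpt suggests, to the family $\mathcal{F}$ of subsets $I \subseteq \{1,\ldots,r\}$ with $\sum_{i \in I} a_i = d$. First I would observe that $\mathcal{F}$ is an antichain in $2^{[r]}$: if $I \subsetneq I'$, then because every $a_i$ is strictly positive, $\sum_{i \in I} a_i < \sum_{i \in I'} a_i$, so both sums cannot equal $d$. This is the incomparability property already flagged in the excerpt, and it makes LYM applicable, giving
\[
\sum_{I \in \mathcal{F}} \frac{1}{\binom{r}{|I|}} \le 1.
\]

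The second step is to replace each denominator $\binom{r}{|I|}$ by the uniform value $\binom{\lfloor m/2 \rfloor}{\lfloor d/2 \rfloor}$, which then yields $|\mathcal{F}| \le \binom{\lfloor m/2 \rfloor}{\lfloor d/2 \rfloor}$ after a trivial summation. For this I need the uniform estimate
\[
\binom{r}{|I|} \le \binom{\lfloor m/2 \rfloor}{\lfloor d/2 \rfloor} \qquad (I \in \mathcal{F}).
\]
Here the hypothesis $a_i \ge 2$ supplies the two essential numeric bounds: summing the $a_i$ over all $i$ gives $r \le \lfloor m/2 \rfloor$, and summing them over $i \in I$ gives $|I| \le \lfloor d/2 \rfloor$. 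Monotonicity of $\binom{n}{k}$ in $n$ thus yields $\binom{r}{|I|} \le \binom{\lfloor m/2 \rfloor}{|I|}$.

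The main (mildly delicate) point is then to justify the further step $\binom{\lfloor m/2 \rfloor}{|I|} \le \binom{\lfloor m/2 \rfloor}{\lfloor d/2 \rfloor}$ by unimodality of binomial coefficients; this is valid precisely when both $|I|$ and $\lfloor d/2 \rfloor$ lie in the non-decreasing half of the profile, i.e., are at most $\lfloor \lfloor m/2 \rfloor / 2 \rfloor$. Since $|I| \le \lfloor d/2 \rfloor$, it is enough to check $\lfloor d/2 \rfloor \le \lfloor \lfloor m/2 \rfloor / 2 \rfloor$, which is exactly what the hypothesis $d \le \lfloor m/2 \rfloor$ gives after taking floors of halves on both sides. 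The main obstacle, such as it is, consists of handling these floor functions correctly; once that is done, chaining the two binomial inequalities with LYM produces the bound at once.
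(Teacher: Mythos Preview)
Your proof is correct and follows essentially the same route as the paper's: both apply the LYM inequality to the antichain $\mathcal{F}$, use $a_i \ge 2$ to get $r \le \lfloor m/2 \rfloor$ and $|I| \le \lfloor d/2 \rfloor$, and then invoke monotonicity of binomial coefficients (via $\lfloor d/2 \rfloor \le \lfloor m/2 \rfloor / 2$) to replace each $\binom{r}{|I|}$ by $\binom{\lfloor m/2 \rfloor}{\lfloor d/2 \rfloor}$. The only cosmetic difference is that the paper groups the sets by their size $j$ and writes the LYM sum as $\sum_j c_j / \binom{r}{j}$, whereas you sum directly over $I \in \mathcal{F}$.
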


\begin{proof}
The sets $I$ satisfying (\ref{eq:sum-equal-d}) are pairwise incomparable
and have size at most $\lfloor d/2 \rfloor$.
Let $c_j$ for $j=1, \ldots, \lfloor d/2 \rfloor$ be the number of these
sets of size $j$. By LYM inequality \cite{Lub66}, we have
$$
\sum_{j=1}^{\lfloor d/2 \rfloor} \frac{c_j}{{r \choose j}} \le 1 \;.
$$
Since $r \le \lfloor m/2 \rfloor$, we have also
$$
\sum_{j=1}^{\lfloor d/2 \rfloor} \frac{c_j}{{\lfloor m/2 \rfloor \choose j}} \le 1
$$
and, since $\lfloor d/2 \rfloor \le \lfloor m/2 \rfloor / 2$, this implies
that the number of sets $I$ satisfying (\ref{eq:sum-equal-d}) is at most
$$
\sum_{j=1}^{\lfloor d/2 \rfloor} c_j \le
{\lfloor m/2 \rfloor \choose \lfloor d/2 \rfloor}
$$
as required.
\end{proof}

\begin{lemma} \label{lem:not-shattered}
Let $G=(V,E)$ be an undirected graph with no isolated vertices
where $|V|=m$, let $2 \le d \le m/2$, and let $Z$ be a subset of
$V$ of size $d$.
The probability that a random permutation $\pi(G)$ of $G$
has no edge connecting $Z$ and $V \setminus Z$ is at most
$$
{\lfloor m/2 \rfloor \choose \lfloor d/2 \rfloor}
{m \choose d}^{-1} \;.
$$
\end{lemma}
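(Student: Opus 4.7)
The plan is to reduce the probabilistic statement to a counting problem that is already handled by Lemma~\ref{lem:integer-sums}. Fix the set $Z \subseteq V$ of size $d$ and observe that the event in question — that the random graph $\pi(G)$ has no edge between $Z$ and $V \setminus Z$ — depends on $\pi$ only through the preimage $\pi^{-1}(Z)$. By symmetry, if $\pi$ is uniform on permutations of $V$, then $\pi^{-1}(Z)$ is uniformly distributed over $d$-subsets of $V$. Hence the desired probability equals the number of $d$-subsets $W \subseteq V$ such that $G$ has no edge between $W$ and $V \setminus W$, divided by $\binom{m}{d}$.

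Next I would observe that a $d$-subset $W$ has no edge to its complement in $G$ exactly when $W$ is a union of connected components of $G$. Let $C_1, \ldots, C_r$ be the components of $G$, with sizes $a_i = |C_i| \ge 2$ (using the assumption that $G$ has no isolated vertices); these sizes satisfy $\sum_{i=1}^r a_i = m$. A $d$-subset which is a union of components corresponds bijectively to a set of indices $I \subseteq \{1, \ldots, r\}$ satisfying $\sum_{i \in I} a_i = d$.

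I would then apply Lemma~\ref{lem:integer-sums} directly: the hypotheses $a_i \ge 2$, $m = \sum a_i$, and $2 \le d \le \lfloor m/2 \rfloor$ (where the upper bound on $d$ follows from the assumption $d \le m/2$ together with integrality) imply that the number of such index sets $I$, and hence the number of bad $d$-subsets $W$, is at most
$$
\binom{\lfloor m/2 \rfloor}{\lfloor d/2 \rfloor}.
$$
Dividing by $\binom{m}{d}$ yields the required bound.

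The proof is essentially bookkeeping once the right reformulations are made; the only genuine content is (i) passing from a random permutation with fixed $Z$ to a fixed graph with random $W = \pi^{-1}(Z)$, and (ii) identifying the disconnection event with $W$ being a union of components. The nontrivial combinatorial input — the LYM-based estimate on the number of component-unions of a given cardinality — is already encapsulated in Lemma~\ref{lem:integer-sums}, so no further obstacle arises.
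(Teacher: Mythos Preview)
Your proposal is correct and essentially identical to the paper's own proof: both reduce to counting $d$-subsets that are unions of connected components by passing from $\pi(G)$ with fixed $Z$ to fixed $G$ with the uniformly random preimage $\pi^{-1}(Z)$, then invoke Lemma~\ref{lem:integer-sums} with the component sizes $a_i\ge 2$.
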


\begin{proof}
Consider a random permutation $\pi$ of the vertices of $G$. Graph $\pi(G)$
has no edge between $Z$ and $V\setminus Z$ if and only if the preimage
$\pi^{-1}(Z)$ is a union of connected components of $G$.
The preimages $\pi^{-1}(Z)$ are uniformly distributed over subsets of $V$
of size $d$. The number of subsets of $V$ of size $d$ that are a union
of connected components in $G$ is equal to the number of solutions of the
equation (\ref{eq:sum-equal-d}) where $r$ is the number of connected
components of $G$ and $a_i$ are their sizes. Since $G$ has no isolated
vertices, we have $a_i \ge 2$ for $i=1,\ldots,r$ and,
hence, the statement follows from Lemma~\ref{lem:integer-sums}.
\end{proof}

\begin{proposition} \label{prop:union-of-permutations}
Let $G_1, \ldots, G_s$, $s \ge 3$, be graphs on the same set of $m$
vertices each of which has no isolated vertices.
If $s$ is fixed and $m$ tends to infinity, then the probability that
the union of independent random permutations of $G_1, \ldots, G_s$
is not a connected graph is at most $O(m^{2-s})$.
\end{proposition}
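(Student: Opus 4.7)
The plan is to combine a union bound over partitions with the independence of the permutations and the single-copy estimate from Lemma~\ref{lem:not-shattered}. The union $\pi_1(G_1)\cup\cdots\cup\pi_s(G_s)$ is disconnected if and only if there exists $Z\subseteq V$ with $1\le|Z|\le\lfloor m/2\rfloor$ that is a union of connected components. Since each $\pi_i(G_i)$ inherits the absence of isolated vertices from $G_i$, the union has no isolated vertex either, so the case $|Z|=1$ contributes zero probability and only $2\le|Z|\le\lfloor m/2\rfloor$ need be considered.

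For a fixed $Z$ of size $d\ge 2$, the event that no $\pi_i(G_i)$ contains an edge between $Z$ and $V\setminus Z$ is the intersection of $s$ events which are independent by the independence of the $\pi_i$, and each has probability at most $\binom{\lfloor m/2\rfloor}{\lfloor d/2\rfloor}/\binom{m}{d}$ by Lemma~\ref{lem:not-shattered}. Applying the union bound over the $\binom{m}{d}$ choices of $Z$ of size $d$ and then summing over $d$ yields
\[
\sum_{d=2}^{\lfloor m/2\rfloor} \binom{m}{d} \left(\frac{\binom{\lfloor m/2\rfloor}{\lfloor d/2\rfloor}}{\binom{m}{d}}\right)^{s}
= \sum_{d=2}^{\lfloor m/2\rfloor} \binom{\lfloor m/2\rfloor}{\lfloor d/2\rfloor}^{s}\binom{m}{d}^{1-s}
\]
as an upper bound on the probability that the union of the permuted graphs is disconnected.

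The main obstacle is to show that this sum is $O(m^{2-s})$ as $m\to\infty$ with $s$ fixed. The $d=2$ summand equals $\lfloor m/2\rfloor^{s}\binom{m}{2}^{1-s}=(1/2+o(1))\,m^{2-s}$, which already matches the claimed order, so it remains to verify that the contribution of $d\ge 3$ is $o(m^{2-s})$. For any fixed $d\ge 3$, writing $d=2j$ or $d=2j+1$ and using $\binom{m}{d}=\Theta(m^{d})$ and $\binom{\lfloor m/2\rfloor}{j}=\Theta(m^{j})$ shows that the summand is of order $m^{j(2-s)}$ or $m^{j(2-s)+(1-s)}$, each of which is $o(m^{2-s})$ since $s\ge 3$ makes both $2-s$ and $1-s$ strictly negative; moreover, a direct ratio computation using $\binom{n}{k+1}/\binom{n}{k}=(n-k)/(k+1)$ gives successive-summand ratio $\Theta(m^{2-s})=O(1/m)$ in the bounded-$d$ regime, so this part of the tail is geometrically controlled. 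For $d$ growing with $m$, I would apply the standard inequalities $\binom{\lfloor m/2\rfloor}{\lfloor d/2\rfloor}\le(em/d)^{d/2}$ and $\binom{m}{d}\ge(m/d)^{d}$, which bound the $d$-th summand by $e^{ds/2}(d/m)^{d(s/2-1)}$; for $s\ge 3$ this decays faster than any fixed negative power of $m$ once $d\to\infty$, so the large-$d$ tail is negligible. Combining both regimes gives the stated bound.
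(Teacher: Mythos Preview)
Your setup is exactly the paper's: union bound over $Z$ with $2\le|Z|\le m/2$, independence of the $\pi_i$, Lemma~\ref{lem:not-shattered} applied to each copy, yielding the summand $p_d=\binom{\lfloor m/2\rfloor}{\lfloor d/2\rfloor}^{s}\binom{m}{d}^{1-s}$. The bounded-$d$ analysis is also essentially the same as the paper's.

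The gap is in the large-$d$ tail. Your bound $p_d\le e^{ds/2}(d/m)^{d(s/2-1)}$ does \emph{not} decay when $d$ is a constant fraction of $m$: at $d=\lfloor m/2\rfloor$ it equals, up to rounding, $\bigl(e^{s/2}\,2^{\,1-s/2}\bigr)^{m/2}$, and already for $s=3$ the base $e^{3/2}/\sqrt{2}\approx 3.17$ exceeds $1$, so your upper bound on the summand blows up exponentially in $m$ rather than vanishing. The culprit is that $\binom{n}{k}\le(en/k)^{k}$ overshoots by a factor $e^{\Theta(k)}$ when $k=\Theta(n)$, while $\binom{m}{d}\ge(m/d)^{d}$ undershoots by a comparable factor in the same regime; the two slacks accumulate into the uncontrolled $e^{ds/2}$. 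So the claim that ``the large-$d$ tail is negligible'' is not established by the inequalities you invoke.

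The paper repairs this with the entropy bounds $\binom{t}{d}\le b(t,d):=e^{t\,\entropy(d/t)}$ and $\binom{m}{d}\ge b(m,d)/(m+1)$, together with the identity $b(m/2,d/2)=b(m,d)^{1/2}$, to obtain $p_d\le(m+1)^{s-1}b(m,d)^{1-s/2}$. Since $1-s/2<0$ and $b(m,d)$ is increasing in $d$ on $[2,m/2]$, one fixes a threshold $d_0=d_0(s)$ with $b(m,d_0)\ge(m/d_0)^{d_0}$ large enough that $p_d=O(m^{1-s})$ uniformly for all $d\ge d_0$. Substituting these tighter bounds for your crude ones closes the gap.
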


\begin{proof}
Let $V$ be the common set of vertices of the graphs.
The probability of the event that the union of independent permutations
of $G_1, \ldots, G_s$ is not a connected graph is at most the sum of the
probabilities over subsets $Z \subseteq V$ of size between $2$ and $m/2$
of the event that the union contains no edge between $Z$ and $V \setminus Z$.
Using $d=|Z|$ as a summation index and using Lemma~\ref{lem:not-shattered},
this is at most
$$
\sum_{d=2}^{\lfloor m/2 \rfloor} p_d
$$
where
\begin{equation} \label{eq:p_d:1}
p_d =
{\lfloor m/2 \rfloor \choose \lfloor d/2 \rfloor}^s {m \choose d}^{1-s} \;.
\end{equation}
For a few smallest values of $d$, the following simple estimate
is sufficient.
If $d$ and $s$ are fixed and $m\to \infty$, we have
\begin{equation} \label{eq:p_d:2}
p_d = O\left(m^{sd/2 + (1-s)d} \right) = O\left(m^{(2-s)d/2}\right) \;.
\end{equation}
For larger values of $d$, we estimate $p_d$ using a more precise bound
on binomial coefficients. Denote
$$
b(t, d) =
\left(\frac{t}{d}\right)^{d}
\left(\frac{t}{t-d}\right)^{t-d}
= \mathrm{e}^{t \entropy(d/t)}
$$
where $\entropy(x) = -x \ln x - (1-x) \ln(1-x)$ is the entropy function.
Since $\entropy(x)$ is increasing for $0 \le x \le 1/2$,
$b(t,d)$ is increasing in $d$ if $t$ is fixed and $1 \le d \le t/2$.
Since $\entropy(x)$ is strictly concave and $\entropy(0)=0$,
$\entropy(x)/x$ is decreasing for $0 < x \le 1$. By substitution
$x=d/t$, we obtain that $b(t,d)$ is increasing in $t$ if $d$ is fixed.
By a well-known upper bound on binomial coefficients and
using $d \le m/2$, we get
$$
{\lfloor m/2 \rfloor \choose \lfloor d/2 \rfloor} \le
b(\lfloor m/2 \rfloor, \lfloor d/2 \rfloor) \le
b(m/2, d/2) = b(m, d)^{1/2} \;.
$$
By a well-known lower bound on binomial coefficients, we have
$$
\frac{1}{m+1} b(m, d) \le {m \choose d} \;.
$$
Using these bounds and (\ref{eq:p_d:1}), we obtain
\begin{equation} \label{eq:p_d:3}
p_d \le (m+1)^{s-1} b(m, d)^{1-s/2} \;.
\end{equation}
Let $d_0 = \lceil 4(s-1)/(s-2) \rceil$. Using (\ref{eq:p_d:2})
for $d \ge 2$, we obtain
\begin{equation} \label{eq:p_d:4}
\sum_{d=2}^{d_0-1} p_d = O(m^{2-s}) \;.
\end{equation}
If $d_0 \le d \le m/2$, then
$b(m,d) \ge b(m,d_0) \ge (m/d_0)^{d_0}$ and by (\ref{eq:p_d:3})
$$
p_d \le (m+1)^{s-1} (m/d_0)^{d_0(1-s/2)} = O(m^{1-s}) \;.
$$
Together with (\ref{eq:p_d:4}), we obtain
\begin{equation}
\sum_{d=2}^{\lfloor m/2 \rfloor} p_d = O(m^{2-s})
\end{equation}
and the proof is finished.
\end{proof}

\section{Size of a hypergraph with connected restrictions}
\label{sec:size-of-hypergraph}

The formula $\phi^*$ needed for the main result will be constructed
as a hypergraph formula. By Proposition \ref{prop:comb-reformulation},
in order to obtain the required formula, we need a sufficiently small
hypergraph with connected restrictions. In this section, we prove the
existence of such a hypergraph whose size is only by a constant factor
larger than the size of a suitable covering system.

We start from a covering system $H$ and show that as a hypergraph
it satisfies a condition related to but weaker than having connected
restrictions. As suggested in the previous section, we consider several
independent random permutations of its vertices. With high probability,
the union $H^*$ of the permuted hypergraphs has almost all restrictions
connected or, more formally, $G(H^*, A)$ is connected for almost all
sets $A$ of size $k-1$. The final
hypergraph $H^{**}$ is then obtained by extending $H^*$ with a limited
number of hyperedges so that it has connected restrictions and the
required upper bound on its size is achieved.

\begin{proposition} \label{prop:small-H-conn-restr}
For every large enough $n$ and an integer $k = n/2 + O(1)$, there is
a $(k+1)$-uniform hypergraph on $n$ vertices of size at most
$6\, C(n,k+1,k)$ with connected restrictions.
\end{proposition}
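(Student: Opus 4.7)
The plan is to build the required hypergraph from a minimum covering system by taking a union of a small constant number of random permutations, and then repairing the few restrictions that remain disconnected. Let $H_0$ be a system of $(k+1)$-subsets of $X$ of size $C(n,k+1,k)$ covering every $k$-subset; the covering property implies that for every $(k-1)$-set $A' \subseteq X$ the graph $G(H_0, A')$ on $X \setminus A'$ has no isolated vertex, since any $v \in X \setminus A'$ belongs to some hyperedge $e \in H_0$ with $A' \cup \{v\} \subseteq e$, and $e \setminus A'$ is then an edge incident to $v$.

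Fix an integer $s$ to be chosen below, let $\pi_1, \dots, \pi_s$ be independent uniformly random permutations of $X$, and set $H^* = \bigcup_{i=1}^s \pi_i(H_0)$, so $|H^*| \le s\, C(n,k+1,k)$. For any fixed $(k-1)$-set $A$, Definition~\ref{def:permutation} yields $G(\pi_i(H_0), A) = \pi_i\bigl(G(H_0, \pi_i^{-1}(A))\bigr)$. Conditioning on the preimages $A_i' := \pi_i^{-1}(A)$, the restriction of $\pi_i$ to $X \setminus A_i'$ is a uniformly random bijection onto $X \setminus A$, and these bijections are independent across $i$. Hence, after an arbitrary relabelling, each $G(\pi_i(H_0),A)$ is a uniform random permutation of a fixed graph on $m := n - k + 1$ vertices with no isolated vertex, and the $s$ such permutations are independent. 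Proposition~\ref{prop:union-of-permutations} therefore gives
\[
\mathbb{P}\bigl[\, G(H^*, A) \text{ is not connected}\,\bigr] \le O(m^{2-s}),
\]
uniformly in $A$.

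Each $G(H^*,A)$ still has no isolated vertex, so whenever it is disconnected it has at most $m/2$ components, which can be merged by adding at most $m/2 - 1$ further $(k+1)$-sets of the form $A \cup \{u,v\}$ to the hypergraph. Let $H^{**}$ denote the resulting hypergraph; by construction $H^{**}$ has connected restrictions. With $k = n/2 + O(1)$ one has $m = \Theta(n)$, $\binom{n}{k-1} = \Theta\bigl(\binom{n}{k}\bigr)$, and $C(n,k+1,k) \ge \binom{n}{k}/(k+1) = \Theta\bigl(\binom{n}{k}/n\bigr)$. Summing over all $(k-1)$-sets $A$, the expected number of repair hyperedges is at most
\[
\binom{n}{k-1} \cdot O(m^{2-s}) \cdot \tfrac{m}{2} \;=\; O\!\left(\binom{n}{k}\, n^{3-s}\right) \;=\; O\!\left(C(n,k+1,k)\, n^{4-s}\right).
\]
Choosing $s = 5$ makes this $O(C(n,k+1,k)/n)$, which is at most $C(n,k+1,k)$ for all sufficiently large $n$. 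Thus $\mathbb{E}[\,|H^{**}|\,] \le 5\, C(n,k+1,k) + C(n,k+1,k) = 6\, C(n,k+1,k)$, and a deterministic choice of permutations achieving this bound exists.

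The main obstacle is the careful bookkeeping needed to invoke Proposition~\ref{prop:union-of-permutations}: the underlying graphs $G(H_0, A_i')$ themselves depend on the random $\pi_i$, so the proposition must be applied conditionally on the preimages $A_i'$, which requires verifying that, given these preimages, the graphs $G(\pi_i(H_0), A)$ are independent uniform random permutations on the common vertex set $X \setminus A$ in the precise sense of Section~\ref{sec:random-permutations}. The remaining step of choosing $s$ so that the repair budget fits inside the stated constant $6$ is routine once the probability estimate is in place.
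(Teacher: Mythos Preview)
Your proof is correct and follows essentially the same route as the paper: start from a minimum covering design, take the union of $s=5$ independent random permutations, condition on the preimages $\pi_i^{-1}(A)$ to reduce to Proposition~\ref{prop:union-of-permutations}, and then repair the few bad restrictions with at most $(n-k-1)/2$ extra hyperedges each. The only cosmetic difference is that you bound the expected total repair cost directly, while the paper first bounds the expected number of bad sets and then fixes a good choice of permutations; the arithmetic and the final constant $6$ come out the same.
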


\begin{proof}
Let $|X|=n$ and let $H$ be a $(k+1)$-uniform hypergraph of size
$C(n, k+1, k)$ covering all $k$-subsets of $X$. Let $s=5$,
let $\pi_1, \ldots, \pi_s$ be independent random permutations
of $X$ chosen from the uniform distribution, and let
$H^* = \pi_1(H) \cup \ldots \cup \pi_s(H)$.

If $A$ is a $(k-1)$-subset of $X$ and $x \in X \setminus A$,
then by assumption, there is a hyperedge $B \in H$ containing
$A \cup \{x\}$, so $B = A \cup \{x, y\}$ for some $y \in X \setminus A$.
In particular, vertex $x$ has non-zero degree in $G(H, A)$.
This argument applies also to each of the graphs $G(\pi_i(H),A)$,
so each of them has
no isolated vertices. The graph $G(H^*, A)$ is a union of $s$
graphs obtained in this way, however, we cannot directly use
Proposition~\ref{prop:union-of-permutations}.
The graph $G(\pi_i(H),A)$ is isomorphic to $G(H,\pi_i^{-1}(A))$
and, hence, it is not, in general, a random permutation
of a fixed graph on $X \setminus A$.
In order to overcome this, we partition the probability space of
all $s$-tuples of permutations of $X$ by a set of mutually exclusive
conditions such that the assumption of
Proposition~\ref{prop:union-of-permutations} is satisfied by
the conditional distribution of the $s$-tuple of graphs
$G(\pi_i(H),A)$, $i=1,\ldots,s$ under any of these conditions.
Namely, for an arbitrary $s$-tuple $(A_1', \ldots, A_s')$ of $(k-1)$-subsets
of $X$ let $c(A_1', \ldots, A_s')$ be the condition $\pi_i(A_i') = A$
for all $i=1,\ldots,s$. Clearly, these conditions represent a partition
of the probability space and, since $\pi_i$ are independent and
uniformly distributed, all of these conditions have the same probability.

Fix an arbitrary $s$-tuple $(A_1', \ldots, A_s')$ and consider the
conditional distribution under $c(A_1', \ldots, A_s')$.
For a fixed $i$ and all permutations $\pi_i$ satisfying $\pi_i(A_i') = A$,
the graphs $G(\pi_i(H),A)$ are isomorphic, since all of them are isomorphic
to $G(H,A_i')$. Choose any of the
graphs $G(\pi_i(H),A)$ as $G_i$. The distribution of $G(\pi_i(H),A)$ is
the same as the distribution of random permutations of $G_i$. Moreover,
these random permutations
are independent for $i=1,\ldots,s$. Hence, we can use
Proposition~\ref{prop:union-of-permutations} with $V=X \setminus A$
to obtain that $G(H^*, A)$ is not connected with probability at most
$O((n-k+1)^{2-s})$ under any of the considered conditions and, hence,
also unconditionally.

A $(k-1)$-set $A$ is called ``bad'', if the graph $G(H^*,A)$ is not connected
and let $t$ be the random variable equal to the number of such sets.
By the argument above, we have
\begin{equation} \label{eq:expected-bad-sets}
\mathbf{E}\; t = O\left((n-k+1)^{2-s}\right) {n \choose k-1} \;.
\end{equation}
For each of the $t$ ``bad'' sets $A$, we add several $(k+1)$-sets to $H^*$
in order to add edges to $G(H^*,A)$ so that this graph becomes connected.
The number of $(k+1)$-sets needed for this is at most the number
of the connected components of $G(H^*,A)$ minus $1$
which is at most $(n-k-1)/2$. Hence, the resulting $(k+1)$-uniform hypergraph
$H^{**}$ has size at most $s |H| + t (n-k-1)/2$. By (\ref{eq:expected-bad-sets}),
there is a choice of permutations $\pi_i$ for $i=1,\ldots,s$, such that
$$
t = O\left((n-k+1)^{2-s}\right) {n \choose k-1}
= O\left((n-k+1)^{2-s}\right) {n \choose k}
$$
Since $n-k = \Theta(k)$, we can conclude
$$
|H^{**}| \le s |H| + O\left(k^{3-s}\right) {n \choose k} \;.
$$
Using (\ref{eq:trivi-lower-bound}) and $s=5$, we get
$$
|H^{**}| \le (s+1) |H| \le 6 \, C(n, k+1, k)
$$
if $n$ and, hence, also $k$ are sufficiently large. This finishes
the proof, since $H^{**}$ has connected restrictions.
\end{proof}

\section{Proof of the results}
\label{sec:results-proof}

In this section, we summarize the results of the previous sections.
First, we finish the proof of Proposition~\ref{prop:size-of-theta}
on a lower and an upper bound on the minimum size of a formula
ucp-equivalent to $\Psi_{n,k}$.

\begin{proof}[Proof of Proposition~\ref{prop:size-of-theta}]
The lower bound is Proposition~\ref{prop:lower-bound}.
For the upper bound, assume a large enough $n$ and $k = n/2 +O(1)$.
Our goal is to prove that there is a formula ucp-equivalent to
$\Psi_{n,k}$ and of size at most $6(k+1) C(n, k+1, k)$. For this purpose,
we use a hypergraph formula. Consider a hypergraph $H$ of size at most
$6\, C(n, k+1, k)$ with connected restrictions
guaranteed by Proposition~\ref{prop:small-H-conn-restr}.
By Proposition~\ref{prop:comb-reformulation}, $\theta(H)$ is
ucp-equivalent to $\Psi_{n,k}$ and its size is $(k+1)|H|$
as required. Using
(\ref{eq:define-mu-C}) and the bound (\ref{eq:mu-upper-1})
or (\ref{eq:mu-upper-2}), we obtain the required asymptotic
estimate.
\end{proof}

The constructions presented up to now imply
Theorem~\ref{thm:separation} as follows.

\begin{proof}[Proof of Theorem~\ref{thm:separation}]
Let $\phi^\ell$ be the formula from Definition~\ref{def:phi-ell}.
This formula is ucp-irredundant and ucp-equivalent to $\Psi_{n,k}$ by
Proposition~\ref{prop:phi-ell}. Assuming $k = n/2 + O(1)$,
its size is
$$
|\phi^\ell| = (k+1) {n-1 \choose k}
= (k+1)\left(1 - \frac{k}{n}\right) {n \choose k}
= \Theta(n) {n \choose k} \;.
$$
Let $\phi^* \subseteq \Psi_{n,k}$ be a formula ucp-equivalent to $\Psi_{n,k}$
of size satisfying the upper bound from Proposition~\ref{prop:size-of-theta}.
The statement follows by comparing the size of the formulas $|\phi^\ell|$
and $|\phi^*|$.
\end{proof}

\section{Conclusion and further research}
\label{sec:conclusion}

The paper introduces ucp-equivalence on CNF formulas and investigates
the ratio of the size of a ucp-irredundant formula and the size of
a smallest ucp-equivalent formula. The paper shows that a general upper
bound on this ratio is at most $n^2$ (Proposition~\ref{prop:upper-bound})
and at least $\Omega(n/\ln n)$ (Theorem~\ref{thm:separation}). The gap
between these bounds is quite large. An interesting open problem
is to reduce this gap. It is plausible to assume that the upper
bound can be improved. Another interesting question is whether
the lower bound $\Omega(n/\ln n)$ or an even better bound can be proven
using a different type of functions and without the use
of probabilistic method.

\bigskip
\noindent\textbf{Acknowledgement.} The author would like to thank
Diana Piguet for a fruitful discussion on using hypergraphs, in particular,
for suggesting Definition~\ref{def:use-undirected-hg} which later
appeared to be used also for other purposes, and
Matas {\v S}ileikis for suggesting Lemma~\ref{lem:integer-sums}
which improves a slightly weaker bound on the same quantity by the author.
The author acknowledges institutional support RVO:67985807.

\bibliography{ucp_irredundant}{}

\begin{thebibliography}{10}

\bibitem{BKNW09}
Ch. Bessiere, G.~Katsirelos, N.~Narodytska, and T.~Walsh.
\newblock Circuit complexity and decompositions of global constraints.
\newblock In {\em Proceedings of the Twenty-First International Joint
  Conference on Artificial Intelligence (IJCAI-09)}, pages 412--418, 2009.

\bibitem{BJMSM12}
Lucas Bordeaux, Mikol\'{a}\v{s} Janota, Joao Marques-Silva, and Pierre Marquis.
\newblock On unit-refutation complete formulae with existentially quantified
  variables.
\newblock In {\em Proceedings of the Thirteenth International Conference on
  Principles of Knowledge Representation and Reasoning}, KR'12, pages 75--84.
  AAAI Press, 2012.

\bibitem{BBIMM18}
M.~L. Bonet, S.~Buss, A.~Ignatiev, J.~Marques{-}Silva, and A.~Morgado.
\newblock {MaxSAT} resolution with the dual rail encoding.
\newblock In {\em Thirty-Second AAAI Conference on Artificial Intelligence},
  2018.

\bibitem{IMM17}
A.~Ignatiev, A.~Morgado, and J.~Marques{-}Silva.
\newblock On tackling the limits of resolution in {SAT} solving.
\newblock In Serge Gaspers and Toby Walsh, editors, {\em Theory and
  Applications of Satisfiability Testing -- SAT 2017}, pages 164--183. Springer
  International Publishing, 2017.

\bibitem{MIBMB19}
A.~Morgado, A.~Ignatiev, M.~L. Bonet, J.~Marques{-}Silva, and S.~Buss.
\newblock {DRMaxSAT} with {MaxHS}: First contact.
\newblock In {\em International Conference on Theory and Applications of
  Satisfiability Testing}, pages 239--249. Springer, 2019.

\bibitem{Kucera22}
Petr Ku{\v{c}}era.
\newblock Learning a propagation complete formula.
\newblock In {\em International Conference on Integration of Constraint
  Programming, Artificial Intelligence, and Operations Research}, pages
  214--231. Springer, 2022.

\bibitem{HK93}
P.~L. Hammer and A.~Kogan.
\newblock Optimal compression of propositional {Horn} knowledge bases:
  Complexity and approximation.
\newblock {\em Artificial Intelligence}, 64(1):131--145, 1993.

\bibitem{KS20}
P.~Ku{\v c}era and P.~Savick\'y.
\newblock Bounds on the size of {PC} and {URC} formulas.
\newblock {\em Journal of Artificial Intelligence Research}, 69:1395--1420,
  2020.

\bibitem{ES74}
P.~Erd{\H o}s and J.~Spencer.
\newblock {\em Probabilistic methods in combinatorics}.
\newblock Akademiai Kiado-Budapest, 1974.

\bibitem{Sid97}
A.~F. Sidorenko.
\newblock Upper bounds for {Tur{\'a}n} numbers.
\newblock {\em Journal of Combinatorial theory, Series A}, 77(1):134--147,
  1997.

\bibitem{FR85}
P.~Frankl and V.~R{\"o}dl.
\newblock Lower bounds for {T}ur{\'a}n's problem.
\newblock {\em Graphs and Combinatorics}, 1:213–216, 1985.

\bibitem{GKP95}
D.~M. Gordon, O.~Pataschnik, and G.~Kuperberg.
\newblock New constructions for covering designs.
\newblock {\em Journal of Combinatorial Designs}, 3(4):269--284, 1995.

\bibitem{R85}
V.~R{\"o}dl.
\newblock On a packing and covering problem.
\newblock {\em European Journal of Combinatorics}, 6(1):69--78, 1985.

\bibitem{Sch64}
J.~Sch{\"o}nheim.
\newblock On coverings.
\newblock {\em Pacific Journal of Mathematics}, 14(4):1405--1411, 1964.

\bibitem{BMS12}
L.~Bordeaux and J.~Marques{-}Silva.
\newblock Knowledge compilation with empowerment.
\newblock In {\em {SOFSEM} 2012}, volume 7147 of {\em Lecture Notes in Computer
  Science}, pages 612--624. Springer, 2012.

\bibitem{BBCGKV13}
M.~Babka, T.~Balyo, O.~{\v{C}}epek, {\v{S}}.~Gursk{\'y}, P.~Ku{\v{c}}era, and
  V.~Vl{\v{c}}ek.
\newblock Complexity issues related to propagation completeness.
\newblock {\em Artificial Intelligence}, 203:19--34, 2013.

\bibitem{BBM23a}
K.~B\'{e}rczi, E.~Boros, and K.~Makino.
\newblock Hypergraph {Horn} functions.
\newblock {\em SIAM Journal on Discrete Mathematics}, 38(2):1417--1437, 2024.

\bibitem{BBM23b}
K.~B\'{e}rczi, E.~Boros, and K.~Makino.
\newblock Matroid {Horn} functions.
\newblock {\em Journal of Combinatorial Theory, Series A}, 203:105838, 2024.

\bibitem{Lub66}
D.~Lubell.
\newblock A short proof of {Sperner's} lemma.
\newblock {\em Journal of Combinatorial Theory}, 1(2):299, 1966.

\bibitem{Sil23}
Matas {\v S}ileikis.
\newblock Personal communication, 2023.

\end{thebibliography}
\bibliographystyle{unsrt}

\end{document}